\newtheorem{theorem}{Theorem}[section]
\newtheorem{lemma}[theorem]{Lemma}
\newtheorem{corollary}[theorem]{Corollary}
\theoremstyle{plain}
\newtheorem{definition}[theorem]{Definition}
\newtheorem{question}[theorem]{Question}
\theoremstyle{definition}
\theoremstyle{remark}
\numberwithin{equation}{section}
\begin{document}

\title[Soul\'e's variant of Bloch-Quillen identification]{On extending Soul\'e's variant of Bloch-Quillen identification}

\author{Sen Yang}
\address{Yau Mathematical Sciences Center \\
Tsinghua University \\
Beijing, China\\
}
\email{syang@math.tsinghua.edu.cn; senyangmath@gmail.com}

\subjclass[2010]{14C25}
\date{}

\maketitle

\begin{abstract}
Based on Balmer's tensor triangular Chow group \cite{B-5}, we propose (Milnor)K-theoretic Chow groups of derived categories of schemes. These Milnor K-theoretic Chow groups recover the classical ones \cite{F} for smooth projective varieties and can detect nilpotent, while the classical ones can't do. 

As an application, we extend Soul\'e's variant of Bloch-Quillen identification from smooth projective varieties to their trivial infinitesimal thickenings. This answers affirmatively a question by Green-Griffiths for trivial deformations, see Question ~\ref{question: Green-Griffiths} below.
  
\end{abstract}

\tableofcontents

\section{Introduction}
\label{Introduction}
Let $f: \mathcal{X} \to S$ be a smooth projective
morphism, where $S = \mathrm{Spec}(k[[t]])$ and $k$ is a field of characteristic $0$. Let $X_{j}= \mathcal{X} \times_{S} S_{j}$, where $S_{j} = \mathrm{Spec}(k[t]/ t^{j+1})$.   
We use $X$ to denote $X_{0}$, and call the family $\{ X_{j}\}_{j} $ a deformation of $X$, where $X_{j}$ is called the $j$-th infinitesimal thickening of $X$. In particular, the family $\{ X_{j}\}_{j} $ is a trivial deformation of $X$ and $X_{j}$ is called the $j$-th trivial infinitesimal thickening of $X$, if, for each $j$,  $X_{j}= X \times_{k} S_{j}$. 

In \cite{GG-1, GG-2}, Green-Griffiths studies infinitesimal deformations of  Chow groups. Fundamental to their work is the Soul\'e's  variant of the Bloch-Quillen identification
\[
 CH^{q}(X)_{\mathbb{Q}}= H^{q}(X, K^{M}_{q}(O_{X}))_{\mathbb{Q}},
\]
where $K^{M}_{q}(O_{X})$ is the 
Milnor K-theory sheaf associated to the presheaf
\[
  U \to K^{M}_{q}(O_{X}(U)).
\]

On page 471 of \cite{GG-1}, Green-Griffiths suggested that it would be interesting to extend Bloch-Quillen
 identification to infinitesimal thickening $X_{j}$:
\begin{question} \cite{GG-1} \label{question: Green-Griffiths}
Let $X$ denote the closed fiber $X_{0}$ and $X_{j}$ denote the $j$-th infinitesimal thickening of $X$(not necessarily trivial)as above, do we have the following identification
{\small
\[
 CH^{q}(X_{j})_{\mathbb{Q}} = H^{q}(X_{j}, K^{M}_{q}(O_{X_{j}}))_{\mathbb{Q}} \ ?
\]
}
where $K^{M}_{q}(O_{X_{j}})$ is the 
Milnor K-theory sheaf associated to the presheaf
\[
  U \to K^{M}_{q}(O_{X_{j}}(U)).
\]

\end{question}

This question inspires us to propose a new definition of Chow groups capturing the nilpotent which is useful for studying deformation problems. Our starting point is to look at the derived category $D^{\mathrm{perf}}(X)$ obtained from the exact category of perfect complexes of $O_{X}$-modules. 

In \cite{B-5}, Balmer introduces tensor triangular Chow groups.  The new insight is to allow the coefficients of $q$-cycles to lie in Grothendieck groups of suitable triangulated categories.  Balmer's idea is followed by S. ~Klein \cite{K} and the author \cite{Y-2}.

In Section ~\ref{K-theoretic Chow groups of derived categoies}, we recall Balmer's tensor triangular Chow groups and propose K-theoretic Chow groups of derived categoies of schemes by slightly modifying Balmer's. 
In Section ~\ref{Definition}, we propose Milnor K-theoretic Chow groups with an additional assumption. We compute relative K-groups with support in Section ~\ref{Goodwillie isomorphism} and prove the main result in Section ~\ref{Bloch's formula}, which answers Question ~\ref{question: Green-Griffiths} for trivial infinitesimal thickening, i.e., for each $j$,  $X_{j}= X \times_{k} S_{j}$.

\textbf{Acknowledgements}
This note is a revision of \cite{Y-2}.The author is very grateful to Mark Green and Phillip Griffiths for asking interesting questions and for telling him their paper \cite{GG-1} which started \cite{Y-2}. He also thanks the following professors for discussions and/or comments on \cite{Y-2}: Paul Balmer,  Ben Dribus, Jerome Hoffman, Sebastian Klein, Claudio Pedrini, Marco Schlichting and Burt Totaro. Part of results has been done in \cite{Y-1}.

Many thanks to the anonymous referee(s) for careful reading and professional suggestions that improved 
this note a lot.  

\textbf{Conventions} 
K-theory used in this note will be Thomason-Trobaugh K-theory \cite{TT}, if not stated otherwise.  For any abelian group $M$, $M_{\mathbb{Q}}$ denotes the image of $M$ in $M \otimes_{\mathbb{Z}} \mathbb{Q}$.   ``Dimension" always means Krull dimension, while Balmer uses general dimension functions in \cite{B-5}.

\section{K-theoretic Chow groups of derived categories}
\label{K-theoretic Chow groups of derived categoies}
In this section, $X$ is an equidimensional noetherian scheme over a field of finite Krull dimension $d$. As explained in \cite{B-4}, one can filter the tensor triangulated  category $\mathcal{L} = D^{\mathrm{perf}}(X)$ by dimension of support
\[
\dots \subset \mathcal{L}_{(p)}(X) \subset \mathcal{L}_{(p+1)}(X) \subset \dots \subset \mathcal{L}, 
\]  
where $\mathcal{L}_{(p)}(X)$ is defined to be
\[
  \mathcal{L}_{(p)}(X) := \{ E \in D^{\mathrm{perf}}(X) \mid \mathrm{codim_{Krull}}(\mathrm{supph}(E)) \geq -p \},
\]
where the closed subset $\mathrm{supph}(E) \subset X$ is the support of the total homology of the perfect complex $E$.

Let $(\mathcal{L}_{(p)}(X)/\mathcal{L}_{(p-1)}(X))^{\#}$ denote the idempotent completion of the Verdier quotient $\mathcal{L}_{(p)}(X)/\mathcal{L}_{(p-1)}(X)$.
\begin{theorem} \cite{B-4}  \label{theorem: BalmerTheorem}
Localization induces an equivalence
\[
 (\mathcal{L}_{(p)}(X)/\mathcal{L}_{(p-1)}(X))^{\#}  \simeq \bigsqcup_{x \in X^{(-p)}}D_{{x}}^{\mathrm{perf}}(X)
\]
between the idempotent completion of the quotient $\mathcal{L}_{(p)}(X)/\mathcal{L}_{(p-1)}(X)$ and the coproduct over $x \in X^{(-p)}$ of the derived category of perfect complexes of $ O_{X,x}$-modules with homology supported on the closed point $x \in \mathrm{Spec}(O_{X,x})$.
\end{theorem}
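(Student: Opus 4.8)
The plan is to prove this by adapting the classical coniveau (Gersten) localization argument to triangulated categories; the two engines are the Thomason--Trobaugh localization theorem and the continuity of $D^{\mathrm{perf}}$ under filtered systems of open subschemes. First I construct the comparison functor. For each $x\in X^{(-p)}$, flat base change along $\mathrm{Spec}(O_{X,x})\to X$ gives an exact functor $D^{\mathrm{perf}}(X)\to D^{\mathrm{perf}}(\mathrm{Spec}(O_{X,x}))$, $E\mapsto E_x$. If $E\in\mathcal{L}_{(p)}(X)$ then $\mathrm{supph}(E)$ has dimension $\le d+p$, so any point of $\mathrm{supph}(E)$ generizing $x$ has irreducible closure of dimension $\le d+p=\dim\overline{\{x\}}$ and hence equals $x$ (here one uses that $X$ is equidimensional, so that dimension drops strictly along proper specializations); thus $E_x$ has homology supported on the closed point and lies in $D^{\mathrm{perf}}_x(X)$. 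Since $\mathrm{supph}(E)$ then has only finitely many points of dimension exactly $d+p$, $E_x=0$ for all but finitely many $x$, so $E\mapsto (E_x)_x$ defines an exact functor $\Phi_p\colon\mathcal{L}_{(p)}(X)\to\bigsqcup_{x\in X^{(-p)}}D^{\mathrm{perf}}_x(X)$. An object of $\mathcal{L}_{(p-1)}(X)$ has $\mathrm{supph}$ of dimension $\le d+p-1$, hence contains no point of $X^{(-p)}$, so is killed by $\Phi_p$; therefore $\Phi_p$ factors through the Verdier quotient and, the target being idempotent complete, through its idempotent completion, giving a functor $\bar\Phi_p^{\#}$ which is the candidate equivalence.

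For full faithfulness I would compute both sides as filtered colimits. Writing $\mathcal{L}_{(p-1)}(X)=\bigcup_Z D^{\mathrm{perf}}_Z(X)$ over closed $Z$ of dimension $\le d+p-1$, and using that Verdier localization commutes with directed unions of thick subcategories on the level of morphisms, one gets $\mathrm{Hom}_{\mathcal{L}_{(p)}/\mathcal{L}_{(p-1)}}(E,F)=\operatorname*{colim}_{U}\mathrm{Hom}_{D^{\mathrm{perf}}(U)}(E|_U,F|_U)$, the colimit over opens $U$ whose complement has dimension $\le d+p-1$, where the Thomason--Trobaugh localization theorem identifies $\mathcal{L}_{(p)}(X)/D^{\mathrm{perf}}_{X\setminus U}(X)$ with a full subcategory of $D^{\mathrm{perf}}(U)$. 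By continuity of $D^{\mathrm{perf}}$ this colimit is a $\mathrm{Hom}$ group in $D^{\mathrm{perf}}(Y)$ for the pro-open limit $Y=\varprojlim_U U$, whose underlying space is the set of points of dimension $\ge d+p$ and whose closed points are exactly $X^{(-p)}$. The key point is that the restrictions $E_Y$, $F_Y$ are supported on $\mathrm{supph}(E)\cap X^{(-p)}$, $\mathrm{supph}(F)\cap X^{(-p)}$, i.e.\ on finitely many closed points of $Y$, which are pairwise incomparable; a standard gluing argument then splits $E_Y$ as the coproduct of its localizations at those points, whence $\mathrm{Hom}_{D^{\mathrm{perf}}(Y)}(E_Y,F_Y)=\bigoplus_{x\in X^{(-p)}}\mathrm{Hom}_{D^{\mathrm{perf}}_x(X)}(E_x,F_x)$, which is precisely the morphism group in the coproduct; tracking identifications shows this matches $\bar\Phi_p^{\#}$.

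Finally, for essential surjectivity after idempotent completion, since $\bar\Phi_p^{\#}$ is fully faithful with idempotent-complete target its essential image is a thick subcategory, so it suffices to hit a set of generators. By Thomason's classification of thick subcategories, $D^{\mathrm{perf}}_x(X)=D^{\mathrm{perf}}_{\{x\}}(\mathrm{Spec}(O_{X,x}))$ is generated, as a thick subcategory, by the Koszul complexes on finite systems of elements generating $\mathfrak{m}_x$-primary ideals; choosing such elements in $O_X(U)$ for a small affine $U\ni x$ (so that the closure of their common zero locus is $\overline{\{x\}}$) and invoking Thomason's extension of perfect complexes with prescribed support, each such Koszul complex is a direct summand of $E_x$ for some $E\in D^{\mathrm{perf}}_{\overline{\{x\}}}(X)\subseteq\mathcal{L}_{(p)}(X)$ with $E_{x'}=0$ for $x'\ne x$ in $X^{(-p)}$; hence all generators lie in the essential image and $\bar\Phi_p^{\#}$ is an equivalence. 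I expect the full-faithfulness step to be the main obstacle: making rigorous the passage to the limit $Y$ and the resulting splitting of the morphism groups over the finitely many top-dimensional points, which is exactly where Noetherianity and the precise behaviour of supports under restriction enter.
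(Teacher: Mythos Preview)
The paper does not supply its own proof of this statement: Theorem~\ref{theorem: BalmerTheorem} is quoted verbatim from Balmer \cite{B-4} and used as a black box, so there is no in-paper argument to compare against. Your sketch is a faithful reconstruction of the argument Balmer gives in \cite{B-4}: construct the localization functor to the coproduct, kill $\mathcal{L}_{(p-1)}$ by a support/dimension count, identify morphisms in the Verdier quotient as a filtered colimit over open complements of small closed subsets via Thomason--Trobaugh localization, pass to the limit scheme using continuity of $D^{\mathrm{perf}}$, and obtain essential surjectivity up to summands from Koszul generators together with Thomason's extension of perfect complexes with specified support. The dimension argument you give for why a generization of $x$ inside $\mathrm{supph}(E)$ must equal $x$ is exactly the place where equidimensionality (and catenarity, which holds for schemes of finite type over a field) is used, and you flag this correctly.

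One point worth tightening: in the full-faithfulness step you pass to a pro-scheme $Y=\varprojlim_U U$ and then argue that objects supported on finitely many pairwise-incomparable closed points split as coproducts. This is correct, but the cleaner way (and the one Balmer uses) is to avoid forming $Y$ globally and instead observe directly that for $E,F\in\mathcal{L}_{(p)}(X)$ one may shrink $U$ until $\mathrm{supph}(E|_U)\cup\mathrm{supph}(F|_U)$ is a finite disjoint union of closed points of $U$, after which the splitting is immediate on $U$ itself; continuity then gives the identification with the product of local $\mathrm{Hom}$'s. This sidesteps any worry about the scheme structure on the inverse limit. Apart from that cosmetic point, your proposal is sound and matches the original source the paper cites.
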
 

 The short sequence 
\[
 \mathcal{L}_{(p-1)}(X) \to \mathcal{L}_{(p)}(X) \to (\mathcal{L}_{(p)}(X)/\mathcal{L}_{(p-1)}(X))^{\#},
\]
which is exact up to summand,  induces the following homotopy fibration of K-theory spectrum:
\[
 \mathcal{K}(\mathcal{L}_{(p-1)}(X))  \to \mathcal{K}(\mathcal{L}_{(p)}(X)) \to \mathcal{K}((\mathcal{L}_{(p)}(X)/\mathcal{L}_{(p-1)}(X))^{\#}).
\]
 
 As pointed out in \cite{B-4}, this fibration gives rise to a long exact sequence:
{\scriptsize
\[
 \dots \to K_{n}(\mathcal{L}_{(p-1)}(X)) \rightarrow  K_{n}(\mathcal{L}_{(p)}(X)) \xrightarrow{i} K_{n}((\mathcal{L}_{p}(X)/\mathcal{L}_{p-1}(X))^{\#}) \xrightarrow{k}  K_{n-1}(\mathcal{L}_{(p-1)}(X)) \to \dots,
\]
 }
 which produces an exact couple as usual and then gives rise to the associated coniveau spectral sequence with $E_{1}$-term:
\[
 E_{1}^{p,q} = K_{-p-q}((\mathcal{L}_{(-p)}(X)/\mathcal{L}_{(-p-1)}(X))^{\#}),
\]
the differential $d_{1,X}^{p,q}$ is the composition $i \circ k$ as usual
{\scriptsize
\begin{align*}
d_{1,X}^{p,q}: & K_{-p-q}((\mathcal{L}_{(-p)}(X)/\mathcal{L}_{(-p-1)}(X))^{\#}) \xrightarrow{k} K_{-p-q-1}(\mathcal{L}_{(-p-1)}(X)) \\
     & \xrightarrow{i}  K_{-p-q-1}((\mathcal{L}_{(-p-1)}(X)/\mathcal{L}_{(-p-2)}(X))^{\#}).
\end{align*}
}

Balmer's definition of tensor triangular Chow groups applies to the tensor triangulated category $\mathcal{L} = D^{\mathrm{perf}}(X)$:
\begin{definition} \cite{B-5} \label{definition: Balmercycles}
Let $q \in \mathbb{Z}$, one defines K-theoretic $q$-cycles associated to the tensor triangulated category $\mathcal{L} = D^{\mathrm{perf}}(X)$ to be
\[
 Z_{q}(\mathcal{L}) := K_{0}((\mathcal{L}_{(q)}/\mathcal{L}_{(q-1)})^{\#})= \bigoplus_{x \in X^{(-q)}}K_{0}(O_{X,x} \ \mathrm{on} \ x),
\] 
where $K_{0}$ is the Grothendieck K-group(the quotient of the monoid of isomorphism class $[a]$ of objects under $\oplus$, by the submonoid of those $[a] + [\sum b] + [c]$ for which there exists a distinguish triangle $ a \to b \to c \to \sum a$). 
\end{definition}

A K-theoretic $q$-cycles can be written as $\sum \limits_{x \in X^{(-q)}}\lambda_{x}\cdot \overline{\{ x \}}$, for $ \sum \limits_{x \in X^{(-q)}}\lambda_{x} \in \bigoplus \limits_{x \in X^{(-q)}}K_{0}(O_{X,x} \ \mathrm{on} \ x) $. Balmer's  new insight is to allow coefficients $\lambda_{x}$ to live in the Grothendieck groups, not $\mathbb{Z}$. 

\begin{definition} \cite{B-5} \label{definition: Balmerboundaries}
Let $q \in \mathbb{Z}$, we use  $\mathrm{Ker}(j)$ to denote the Kernel of $K_{0}(\mathcal{L}_{(q)}) \xrightarrow{j} K_{0}(\mathcal{L}_{(q+1)})$. The K-theoretic $q$-boundaries $B_{q}(\mathcal{L})$ is defined as the image of $\mathrm{Ker}(j)$ in $Z_{q}(\mathcal{L})$
 \[
   B_{q}(\mathcal{L}):= i \circ \mathrm{Ker}(j),
 \]
where $i: K_{0}(\mathcal{L}_{(q)}) \to K_{0}(\mathcal{L}_{(q)}/\mathcal{L}_{(q-1)})^{\#})(=Z_{q}(\mathcal{L}))$.

The K-theoretic Chow group of $q$-cycles in $\mathcal{L}$, denoted $CH_{p}(\mathcal{L})$, is defined 
to be the quotient of $q$-cycles by $q$-boundaries
\[
  CH_{q}(\mathcal{L}) := \dfrac{Z_{q}(\mathcal{L})}{B_{q}(\mathcal{L})}.
\]
\end{definition}

\begin{definition}  \cite{B-4}  \label{definition:Gersten}
For $X$ an equidimensional noetherian scheme over a field of finite Krull dimension $d$ and for each integer $q$ satisfying $1 \leq q \leq d+1$, the $q$-th $\mathrm{Gersten \ complex}$ $G_{q}$ is defined to be the $(-q)$-th line of $E_{1}$-page of the above coniveau spectral sequence
{\footnotesize
\begin{align*}
 G_{q}: \ & 0  \rightarrow  \bigoplus_{x \in X^{(0)}}K_{q}(O_{X,x}) \rightarrow \dots \rightarrow \bigoplus_{x \in X^{(q-1)}}K_{1}(O_{X,x} \ \mathrm{on} \ x) \\
   & \xrightarrow{d_{1,X}^{q-1,-q}} \bigoplus_{x \in X^{(q)}}K_{0}(O_{X,x} \ \mathrm{on} \ x)  \xrightarrow{d_{1,X}^{q,-q}} \bigoplus_{x \in X^{(q+1)}}K_{-1}(O_{X,x} \ \mathrm{on} \ x)   \rightarrow \cdots\\ 
   & \rightarrow \bigoplus_{x \in X^{(d)}}K_{q-d}(O_{X,x} \ \mathrm{on} \ x) \rightarrow 0.
\end{align*}
}

The $q$-th $\mathrm{ augmented \ Gersten \ complex}$, still denoted $G_{q}$ by abuse of notations, is defined to be
the following complex, 
{\footnotesize
\begin{align*}
 G_{q}: \ & 0 \to K_{q}(X) \to  \bigoplus_{x \in X^{(0)}}K_{q}(O_{X,x}) \to \dots \to \bigoplus_{x \in X^{(q-1)}}K_{1}(O_{X,x} \ \mathrm{on} \ x) \\
   & \xrightarrow{d_{1,X}^{q-1,-q}} \bigoplus_{x \in X^{(q)}}K_{0}(O_{X,x} \ \mathrm{on} \ x)  \xrightarrow{d_{1,X}^{q,-q}} \bigoplus_{x \in X^{(q+1)}}K_{-1}(O_{X,x} \ \mathrm{on} \ x)   \to \cdots\\ 
   & \to \bigoplus_{x \in X^{(d)}}K_{q-d}(O_{X,x} \ \mathrm{on} \ x) \to 0.
\end{align*}
}
\end{definition}

\begin{theorem}
Balmer's K-theoretic $(-q)$-boundaries $B_{-q}(\mathcal{L})$ for $\mathcal{L}=D^{\mathrm{perf}}(X)$ agrees with  the image of the differential $d_{1,X}^{q-1,-q}$:
\[
 B_{-q}(\mathcal{L}) = \mathrm{Im}(d_{1,X}^{q-1,-q}).
\]
\end{theorem}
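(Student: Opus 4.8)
The plan is to deduce the identity by unwinding Definition~\ref{definition: Balmerboundaries} together with the description of the $E_{1}$-differential recalled above, and then invoking exactness of the localization long exact sequence; there is no geometric content. Throughout abbreviate $\mathcal{L}_{(p)} := \mathcal{L}_{(p)}(X)$. Applying Definition~\ref{definition: Balmerboundaries} with index $-q$, one has $B_{-q}(\mathcal{L}) = i\bigl(\mathrm{Ker}(j)\bigr)$, where $j \colon K_{0}(\mathcal{L}_{(-q)}) \to K_{0}(\mathcal{L}_{(-q+1)})$ is induced by the inclusion $\mathcal{L}_{(-q)} \hookrightarrow \mathcal{L}_{(-q+1)}$ and $i \colon K_{0}(\mathcal{L}_{(-q)}) \to K_{0}\bigl((\mathcal{L}_{(-q)}/\mathcal{L}_{(-q-1)})^{\#}\bigr) = Z_{-q}(\mathcal{L})$ is the localization map.

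Next I would unwind the differential. Unravelling the definition of $d_{1,X}^{p,q}$ at $p=q-1$ and at the line $-q$ of the $E_{1}$-page, the map $d_{1,X}^{q-1,-q}$ is precisely the composite
\[
 K_{1}\bigl((\mathcal{L}_{(-q+1)}/\mathcal{L}_{(-q)})^{\#}\bigr) \xrightarrow{k} K_{0}(\mathcal{L}_{(-q)}) \xrightarrow{i} K_{0}\bigl((\mathcal{L}_{(-q)}/\mathcal{L}_{(-q-1)})^{\#}\bigr),
\]
where $k$ is the connecting homomorphism of the localization fibration attached to $\mathcal{L}_{(-q)} \subset \mathcal{L}_{(-q+1)}$ and $i$ is the very same localization map $K_{0}(\mathcal{L}_{(-q)}) \to Z_{-q}(\mathcal{L})$ as in the previous paragraph. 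Hence $\mathrm{Im}(d_{1,X}^{q-1,-q}) = i\bigl(\mathrm{Im}\,k\bigr)$.

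It then remains only to identify $\mathrm{Im}\,k$ with $\mathrm{Ker}\,j$. For this I would read off the long exact sequence displayed above, taking $n=1$ and $p=-q+1$:
\[
 \cdots \to K_{1}(\mathcal{L}_{(-q+1)}) \to K_{1}\bigl((\mathcal{L}_{(-q+1)}/\mathcal{L}_{(-q)})^{\#}\bigr) \xrightarrow{k} K_{0}(\mathcal{L}_{(-q)}) \xrightarrow{j} K_{0}(\mathcal{L}_{(-q+1)}) \to \cdots,
\]
where the last displayed arrow, being induced by the inclusion $\mathcal{L}_{(-q)} \hookrightarrow \mathcal{L}_{(-q+1)}$, coincides with $j$. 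Exactness at $K_{0}(\mathcal{L}_{(-q)})$ gives $\mathrm{Im}\,k = \mathrm{Ker}\,j$, and therefore
\[
 \mathrm{Im}(d_{1,X}^{q-1,-q}) = i\bigl(\mathrm{Im}\,k\bigr) = i\bigl(\mathrm{Ker}\,j\bigr) = B_{-q}(\mathcal{L}),
\]
as claimed.

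The only genuinely delicate point --- and thus the main obstacle --- is the index bookkeeping in Balmer's conventions: one must carefully check that the filtration steps $\mathcal{L}_{(-p)}, \mathcal{L}_{(-p-1)}, \mathcal{L}_{(-p-2)}$ occurring in the formula for $d_{1,X}^{p,q}$, specialized at $p=q-1$ and at the line $-q$, line up with the steps $\mathcal{L}_{(-q)}, \mathcal{L}_{(-q-1)}$ of Definition~\ref{definition: Balmerboundaries}, so that the two occurrences of $i$ literally agree; and one must note that the quotient categories involved are the idempotent-completed ones, so that the long exact sequence invoked is exactly the one attached to the sequence that is exact up to direct summands, as recorded above. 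Once the indices are aligned, the statement follows formally from exactness, with no further computation.
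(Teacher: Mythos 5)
Your argument is exactly the paper's: identify $d_{1,X}^{q-1,-q}$ with the composite $i\circ k$, use exactness of the localization long exact sequence at $K_{0}(\mathcal{L}_{(-q)})$ to get $\mathrm{Im}\,k=\mathrm{Ker}\,j$, and conclude $B_{-q}(\mathcal{L})=i(\mathrm{Ker}\,j)=i(\mathrm{Im}\,k)=\mathrm{Im}(d_{1,X}^{q-1,-q})$. The proposal is correct and coincides with the paper's proof, differing only in the explicit attention paid to index bookkeeping.
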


\begin{proof}
The long exact sequence
\[
 \dots \to K_{1}((\mathcal{L}_{(-q+1)}/\mathcal{L}_{(-q)})^{\#}) \xrightarrow{k}  K_{0}(\mathcal{L}_{(-q)}) \xrightarrow{j}  K_{0}(\mathcal{L}_{(-q+1)}) \to \dots
\]
shows that $\mathrm{Ker}(j) = \mathrm{Im}(k)$. So $B_{-q}(\mathcal{L})= i \circ \mathrm{Ker}(j) = i \circ \mathrm{Im}(k)$,
where $i: K_{0}(\mathcal{L}_{(-q)}) \to K_{0}((\mathcal{L}_{(-q)}/\mathcal{L}_{(-q-1)})^{\#})$.

The conclusion follows, since the differential $d_{1,X}^{q-1,-q}$ is the composition $d = i \circ k$ 
\[
d_{1,X}^{q-1,-q}: K_{1}((\mathcal{L}_{(-q+1)}/\mathcal{L}_{(-q)})^{\#}) \xrightarrow{k} K_{0}(\mathcal{L}_{(-q)}) \xrightarrow{i}  K_{0}((\mathcal{L}_{(-q)}/\mathcal{L}_{(-q-1)})^{\#}).
\]
\end{proof}

As pointed out in \cite{B-4}, taking idempotent completion can result in the appearance of negative K-groups. 
In order to include this important information into our study, we propose our K-theoretic Chow groups of $D^{\mathrm{perf}}(X)$ by slightly modifying Balmer's as follows: 
\begin{definition} \label{definition: K-theoretic Chowgp}
The K-theoretic $q$-cycles and K-theoretic rational equivalence of $(X,O_{X})$, denoted $Z_{q}(D^{\mathrm{perf}}(X))$ and $Z_{q,\mathrm{rat}}(D^{\mathrm{perf}}(X))$ respectively, are defined to be 
\[
  Z_{q}(D^{\mathrm{perf}}(X)):= \mathrm{Ker}(d_{1,X}^{q,-q}), Z_{q,\mathrm{rat}}(D^{\mathrm{perf}}(X)):=\mathrm{Im}(d_{1,X}^{q-1,-q}).
\]

The $q$-th K-theoretic Chow group  of $(X,O_{X})$, denoted by $CH_{q}(D^{\mathrm{perf}}(X))$, is defined to be
\[
  CH_{q}(D^{\mathrm{perf}}(X)):= \dfrac{Z_{q}(D^{\mathrm{perf}}(X))}{Z_{q,\mathrm{rat}}(D^{\mathrm{perf}}(X))}.
\]
\end{definition}

It is clear that our K-theoretic Chow groups are cohomology groups of the Gersten complex and that these K-theoretic Chow groups are subgroups of Balmer's:
\begin{corollary} \label{corollary: compareBalmer}
Let $\mathcal{L} = D^{\mathrm{perf}}(X)$, we have the following:
\[
  Z_{q}(D^{\mathrm{perf}}(X)) \subseteq Z_{-q}(\mathcal{L}),
\]
\[
  Z_{q,\mathrm{rat}}(D^{\mathrm{perf}}(X)) = B_{-q}(\mathcal{L}),
\]
\[
  CH_{q}(D^{\mathrm{perf}}(X)) \subseteq CH_{-q}(\mathcal{L}).
\]
\end{corollary}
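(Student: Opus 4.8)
The plan is to reduce the corollary to bookkeeping: all three assertions will follow formally from Definitions \ref{definition: Balmercycles}, \ref{definition: Balmerboundaries}, \ref{definition: K-theoretic Chowgp}, from the preceding theorem identifying $B_{-q}(\mathcal{L})$ with $\mathrm{Im}(d_{1,X}^{q-1,-q})$, and from the relation $d_{1,X}^{q,-q}\circ d_{1,X}^{q-1,-q}=0$. First I would spell out Balmer's $Z_{-q}(\mathcal{L})$: substituting $-q$ for $q$ in Definition \ref{definition: Balmercycles} and invoking Theorem \ref{theorem: BalmerTheorem}, we get $Z_{-q}(\mathcal{L})=K_0\big((\mathcal{L}_{(-q)}/\mathcal{L}_{(-q-1)})^{\#}\big)=\bigoplus_{x\in X^{(q)}}K_0(O_{X,x}\ \mathrm{on}\ x)$, and this is exactly the source of $d_{1,X}^{q,-q}$ in the Gersten complex $G_q$ of Definition \ref{definition:Gersten}. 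Since $Z_q(D^{\mathrm{perf}}(X))=\mathrm{Ker}(d_{1,X}^{q,-q})$ by Definition \ref{definition: K-theoretic Chowgp}, it sits inside this group, which is the first inclusion. The middle equality is then immediate: by Definition \ref{definition: K-theoretic Chowgp} we have $Z_{q,\mathrm{rat}}(D^{\mathrm{perf}}(X))=\mathrm{Im}(d_{1,X}^{q-1,-q})$, and the theorem proved just above identifies this with $B_{-q}(\mathcal{L})$.

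For the inclusion of Chow groups I would first note that $d_{1,X}^{q,-q}\circ d_{1,X}^{q-1,-q}=0$ (consecutive differentials on a line of the $E_1$-page) forces $Z_{q,\mathrm{rat}}(D^{\mathrm{perf}}(X))=\mathrm{Im}(d_{1,X}^{q-1,-q})\subseteq\mathrm{Ker}(d_{1,X}^{q,-q})=Z_q(D^{\mathrm{perf}}(X))$, so the quotient defining $CH_q(D^{\mathrm{perf}}(X))$ makes sense and $B_{-q}(\mathcal{L})\subseteq Z_q(D^{\mathrm{perf}}(X))\subseteq Z_{-q}(\mathcal{L})$. Then the composite $Z_q(D^{\mathrm{perf}}(X))\hookrightarrow Z_{-q}(\mathcal{L})\twoheadrightarrow Z_{-q}(\mathcal{L})/B_{-q}(\mathcal{L})$ has kernel $Z_q(D^{\mathrm{perf}}(X))\cap B_{-q}(\mathcal{L})=B_{-q}(\mathcal{L})=Z_{q,\mathrm{rat}}(D^{\mathrm{perf}}(X))$, so it descends to an injection $CH_q(D^{\mathrm{perf}}(X))\hookrightarrow CH_{-q}(\mathcal{L})$, as claimed. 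The fact that the left-hand groups are \emph{cohomology groups of the Gersten complex} is then just the observation that $Z_q(D^{\mathrm{perf}}(X))/Z_{q,\mathrm{rat}}(D^{\mathrm{perf}}(X))=\mathrm{Ker}(d_{1,X}^{q,-q})/\mathrm{Im}(d_{1,X}^{q-1,-q})$ is by definition the cohomology of $G_q$ at the $\bigoplus_{x\in X^{(q)}}K_0$ spot.

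I do not expect a genuine obstacle here; the one point that needs care is matching index conventions, namely verifying that the differential $d_{1,X}^{q,-q}$ whose kernel defines $Z_q(D^{\mathrm{perf}}(X))$ really emanates from Balmer's group $Z_{-q}(\mathcal{L})$. This rests on reading off, from $E_1^{p,q}=K_{-p-q}\big((\mathcal{L}_{(-p)}/\mathcal{L}_{(-p-1)})^{\#}\big)$ together with Theorem \ref{theorem: BalmerTheorem}, that the line of the $E_1$-page underlying $G_q$ is the $(-q)$-th one and that its degree-$q$ spot is $\bigoplus_{x\in X^{(q)}}K_0(O_{X,x}\ \mathrm{on}\ x)$; once the discrepancy between the superscripts $X^{(-p)}$ in Theorem \ref{theorem: BalmerTheorem} and $X^{(p)}$, $X^{(q)}$ in Definition \ref{definition:Gersten} is reconciled, everything else is automatic.
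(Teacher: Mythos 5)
Your proposal is correct and is exactly the argument the paper has in mind: the paper offers no written proof (it introduces the corollary with ``It is clear that\dots''), and your unwinding of Definitions \ref{definition: Balmercycles}, \ref{definition: Balmerboundaries}, \ref{definition: K-theoretic Chowgp} together with the preceding theorem $B_{-q}(\mathcal{L})=\mathrm{Im}(d_{1,X}^{q-1,-q})$ and the relation $d_{1,X}^{q,-q}\circ d_{1,X}^{q-1,-q}=0$ on the $E_1$-page is precisely the intended bookkeeping, including the index check that $E_1^{q,-q}=\bigoplus_{x\in X^{(q)}}K_0(O_{X,x}\ \mathrm{on}\ x)=Z_{-q}(\mathcal{L})$.
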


\section{Milnor K-theoretic Chow groups of derived categories}
\label{Milnor K-theoretic Chow groups of derived categories}

\subsection{Definition}
\label{Definition}
Let $X$ be an equidimensional noetherian scheme of finite Krull dimension $d$ over a field, and $Y$ be a closed subset of $X$, by abuse of notations, we use $K^{Q}_{m}( X \ \mathrm{on} \ Y)$ to denote Quillen K-groups with supports. In \cite{S}, Soul\'e showed that there exists Adams operations $\psi^{k}$ acting on Quillen K-groups with supports $K^{Q}_{m}( X \ \mathrm{on} \ Y)$, $m \geq 0$.  According to Weibel \cite{W-3}, we can extend Adams operations  $\psi^{k}$ to negative range by descending induction.

For every integer $m \geq 0$,  we have the following Bass fundamental exact sequence:
\begin{align*}
 0 \to & K^{Q}_{m}(X \ \mathrm{on} \ Y) \to K^{Q}_{m}(X[t] \ \mathrm{on} \ Y[t]) \oplus K^{Q}_{m}(X[t^{-1}] \ \mathrm{on} \ Y[t^{-1}])  \\
 & \to K^{Q}_{m}(X[t,t^{-1}] \ \mathrm{on} \ Y[t,t^{-1}]) \to  K^{Q}_{m-1}(X \ \mathrm{on} \ Y)  \to 0.
\end{align*}

For any $x \in K^{Q}_{-1}(X \ \mathrm{on} \ Y)$, we have $x\cdot t \in K^{Q}_{0}(X[t,t^{-1}] \ \mathrm{on} \ Y[t,t^{-1}])$, where $ t \in K_{1}(k[t,t^{-1}])$. We have
\[
 \psi^{k}(x\cdot t ) = \psi^{k}(x)\psi^{k}(t)= \psi^{k}(x) k\cdot t.
\]

Tensoring with $\mathbb{Q}$, we have obtained Adams operations $\psi^{k}$ on $K^{Q}_{-1}(X \ \mathrm{on} \ Y)$:
\[
 \psi^{k}(x)= \dfrac{\psi^{k}(x\cdot t )}{k\cdot t}.
\]
Continuing this procedure, we obtain Adams operations on $K^{Q}_{m}( X \ \mathrm{on} \ Y)$, where $m \in \mathbb{Z}$ .

We note that the Thomason-Trobaugh K-group with support $K_{m}(O_{X,x} \ \mathrm{on} \ x)$, appearing in Definition ~\ref{definition:Gersten}, is isomorphic to Quillen K-group with support $K^{Q}_{m}(O_{X,x} \ \mathrm{on} \ x)$. So Adams operations $\psi^{k}$ exist on $K_{m}(O_{X,x} \ \mathrm{on} \ x)$. Keeping Soul\'e's variant of Bloch's formula in mind, we would like to define Milnor K-theoretic Chow groups of derived categories of schemes. However, we don't have Milnor K-group with support $K_{m}^{M}(O_{X,x} \ \mathrm{on} \ x)$ directly. The following theorem of Soul\'e \cite{S} suggests that we can use suitable eigenspace of Adams operations to rationally replace Milnor K-group.


\begin{theorem} [Theorem 5 in page 526 of \cite{S}] \label{theorem: Soule}
Let $X$ be a regular scheme of finite type over a field, with Krull dimension $d$,
$X^{(p)}$ the set of points of $X$ of codimension $p$. For $x \in X^{(p)}$, $k(x)$ is the residue field.
There exists the following isomorphism
\[
 K^{M}_{m}(k(x)) \xrightarrow{\cong} K^{(m)}_{m}(k(x))  \ \mathrm{modulo \ torsion},
\]
where $K_{m}^{(m)}$ is the eigenspace of $\psi^{k}=k^{m}$ and $\psi^{k}$ is Adams operations.
\end{theorem}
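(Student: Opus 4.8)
The plan is to realize the claimed isomorphism as the rationalization of the natural comparison map from Milnor to Quillen $K$-theory of the field $k(x)$, and then to pin down its image and kernel using the Adams operations.

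First I would recall the graded ring structure on $K_{*}(k(x)) = \bigoplus_{n \geq 0} K_{n}(k(x))$. Since $K_{1}(k(x)) = k(x)^{\times} = K^{M}_{1}(k(x))$, multiplication gives a homomorphism $(k(x)^{\times})^{\otimes m} \to K_{m}(k(x))$, and by Matsumoto's theorem $K_{2}(k(x)) = K^{M}_{2}(k(x))$, so the Steinberg relation $\{a, 1-a\} = 0$ already holds in $K_{2}(k(x))$. As this relation generates the kernel of the surjection $(k(x)^{\times})^{\otimes m} \twoheadrightarrow K^{M}_{m}(k(x))$, the product map descends to a natural homomorphism
\[
\epsilon_{m} \colon K^{M}_{m}(k(x)) \longrightarrow K_{m}(k(x)).
\]
After $\otimes \mathbb{Q}$ and decomposing $K_{m}(k(x))_{\mathbb{Q}} = \bigoplus_{i} K^{(i)}_{m}(k(x))_{\mathbb{Q}}$ into Adams eigenspaces, I would note that each $\psi^{k}$ is a ring endomorphism acting on $K_{1}(k(x))_{\mathbb{Q}}$ as multiplication by $k$ (weight one), hence by $k^{m}$ on an $m$-fold product of units; therefore $\mathrm{Im}(\epsilon_{m} \otimes \mathbb{Q}) \subseteq K^{(m)}_{m}(k(x))_{\mathbb{Q}}$.

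For surjectivity I would pass to the $\gamma$-filtration $F^{\bullet}_{\gamma}$ on $K_{m}(k(x))$, for which $\mathrm{gr}^{i}_{\gamma} K_{m}(k(x))_{\mathbb{Q}} \cong K^{(i)}_{m}(k(x))_{\mathbb{Q}}$. Since $k(x)$ is a field, the weights occurring in $K_{m}$ do not exceed $m$, so $F^{m}_{\gamma} K_{m}(k(x))_{\mathbb{Q}} = K^{(m)}_{m}(k(x))_{\mathbb{Q}}$; and because the $\gamma$-filtration is multiplicative ($F^{i}_{\gamma}\cdot F^{j}_{\gamma} \subseteq F^{i+j}_{\gamma}$) with $K_{1}$ of weight one, the top step $\mathrm{gr}^{m}_{\gamma} K_{m}$ is rationally generated by $m$-fold products of units, i.e.\ by the image of $\epsilon_{m}$. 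This shows $\epsilon_{m} \otimes \mathbb{Q}$ maps onto $K^{(m)}_{m}(k(x))_{\mathbb{Q}}$.

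The remaining point --- that $\epsilon_{m} \otimes \mathbb{Q}$ is injective, equivalently that $\mathrm{Ker}(\epsilon_{m})$ is torsion --- is the genuine obstacle; everything else is formal manipulation of the product and of the operations $\psi^{k}$. I would establish it by constructing a one-sided inverse: using the residue (specialization) homomorphisms of Milnor $K$-theory and their compatibility with the Adams/$\gamma$-filtration, one reduces by induction on transcendence degree to the prime field, where the claim is elementary. Alternatively one may invoke the identification $K^{(m)}_{m}(k(x))_{\mathbb{Q}} \cong H^{m}_{\mathcal{M}}(k(x), \mathbb{Q}(m))$ together with the Nesterenko--Suslin--Totaro theorem computing the latter as $K^{M}_{m}(k(x))_{\mathbb{Q}}$. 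Combining surjectivity with this injectivity input gives the isomorphism modulo torsion.
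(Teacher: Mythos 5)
The paper does not actually prove this statement: it is imported verbatim as Th\'eor\`eme 5, p.~526, of Soul\'e \cite{S}, so there is no internal argument to compare with, and your sketch has to be measured against Soul\'e's original proof. Your architecture --- the symbol map $\epsilon_m\colon K^M_m(k(x))\to K_m(k(x))$ via Matsumoto, the observation that its image has Adams weight $m$ because $\psi^k$ is a ring map acting by $k$ on units, then a surjectivity-plus-injectivity-modulo-torsion analysis --- is indeed the shape of the known proof. But two steps you present as formal are precisely the substance of the theorem. The bound ``the weights occurring in $K_m$ of a field do not exceed $m$'' (equivalently, that $F^{m+1}_{\gamma}K_m(k(x))$ is torsion) is itself a nontrivial theorem of Soul\'e, not a consequence of $k(x)$ being a field. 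More seriously, multiplicativity of the $\gamma$-filtration only yields $\mathrm{Im}(\epsilon_m)\subseteq F^{m}_{\gamma}K_m$ --- the inclusion you had already used --- and does not show that $\mathrm{gr}^{m}_{\gamma}K_m(k(x))_{\mathbb{Q}}$ is \emph{generated} by $m$-fold products of units; that generation statement is Suslin's theorem on the surjectivity of $K^M_m(F)\to \mathrm{gr}^m_{\gamma}K_m(F)$ (with kernel killed by a factorial), proved by Chern-class and transfer arguments, and it is exactly the hard input behind Soul\'e's statement. Your surjectivity paragraph therefore asserts the decisive point rather than proving it.

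On injectivity you are right to call it the genuine obstacle, but the remedy you sketch, ``induction on transcendence degree via residue maps,'' is not yet an argument: the specialization maps do not obviously control the kernel of $\epsilon_m$. The actual proof produces a one-sided inverse, namely Suslin's higher Chern class $c_{m,m}\colon K_m(F)\to K^M_m(F)$ with $c_{m,m}\circ\epsilon_m=(-1)^{m-1}(m-1)!\cdot\mathrm{id}$, which kills the kernel rationally and, combined with the weight bound, pins down the image as $K^{(m)}_m$ modulo torsion. Your alternative of quoting Nesterenko--Suslin--Totaro together with the motivic description of the Adams eigenspaces is logically sound but imports machinery far heavier than, and historically later than, the theorem in question. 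As written, then, the proposal is a correct outline of the standard strategy, but the two decisive inputs (generation of the top $\gamma$-graded piece by symbols, and the factorial one-sided inverse) are left unproved or replaced by appeals to deep external results.
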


Following Soul\'e 's theorem, we define Milnor K-group with support $K_{m}^{M}(O_{X,x} \ \mathrm{on} \ x)$ to be 
suitable eigenspace of $K_{m}(O_{X,x} \ \mathrm{on} \ x)$:
\begin{definition}
Let $X$ be a $d$-equidimensional noetherian scheme and $x \in X^{(j)}$. For $m \in \mathbb{Z}$, Milnor K-group with support $K_{m}^{M}(O_{X,x} \ \mathrm{on} \ x)$ is rationally defined to be 
\[
  K_{m}^{M}(O_{X,x} \ \mathrm{on} \ x) := K_{m}^{(m+j)}(O_{X,x} \ \mathrm{on} \ x)_{\mathbb{Q}},
\] 
where $K_{m}^{(m+j)}$ is the eigenspace of $\psi^{k}=k^{m+j}$.
\end{definition}

The reason why we choose $K_{m}^{(m+j)}$ to define $K_{m}^{M}$ is inspired by another result of Soul\'e. 

Let $X$ be a regular scheme of finite type over a field, with Krull dimension $d$.  
For $x \in X^{(p)}$, $k(x)$ denotes the residue field. Let $f: \mathrm{Spec}(k(x)) \to \mathrm{Spec}(O_{X,x})$ denote the closed immersion. It is known that 
\[
f_{\ast}: K_{m}(k(x)) \xrightarrow{\cong} K_{m}(O_{X,x} \ \mathrm{on} \ x).
\]
\begin{lemma}  \cite{S}  \label{theorem: SouleRRWithoutdenominator}
Let $X$ be a regular scheme of finite type over a field, with Krull dimension $d$.
For $x \in X^{(p)}$, let $f: \mathrm{Spec}(k(x)) \to \mathrm{Spec}(O_{X,x})$ be the closed immersion.
 We have(for any integer $m$ and $i$)
 \[
f_{\ast}: K^{(i)}_{m}(k(x)) \xrightarrow{\cong} K^{(p+i)}_{m}(O_{X,x} \ \mathrm{on} \ x), \ \mathrm{modulo \ torsion}.
\]
\end{lemma}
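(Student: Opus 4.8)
The plan is to reduce the statement to Soul\'e's Adams--Riemann--Roch theorem \cite{S} for the regular closed immersion $f$, the key point being that the conormal sheaf of $f$ is a vector bundle on the point $\mathrm{Spec}(k(x))$, hence trivial, so that the Bott-class correction term collapses to a bare power of $k$ with no denominators. First I would record why $f_{\ast}$ is an isomorphism: since $O_{X,x}$ is regular, the $K$-theory of perfect complexes on $\mathrm{Spec}(O_{X,x})$ with homology supported at $x$ agrees with the $G$-theory of finitely generated $O_{X,x}$-modules supported at $x$, and Quillen's d\'evissage identifies the latter with the $K$-theory of finite-dimensional $k(x)$-vector spaces, the comparison functor being exactly $f_{\ast}$; thus $f_{\ast}\colon K_{m}(k(x))\xrightarrow{\cong}K_{m}(O_{X,x}\ \mathrm{on}\ x)$ for every $m\in\mathbb{Z}$ (both sides vanishing for $m<0$, being the $K$-theory of a field). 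The whole content of the lemma is therefore the compatibility of this isomorphism with the Adams operations.

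Next I would invoke Soul\'e's Riemann--Roch theorem for the regular closed immersion $f$: modulo torsion,
\[
 \psi^{k}\bigl(f_{\ast}(y)\bigr)=f_{\ast}\bigl(\theta^{k}(N_{f})\cdot\psi^{k}(y)\bigr)\qquad\text{for all }y\in K_{m}(k(x)),
\]
where $N_{f}$ is the normal bundle of $f$ and $\theta^{k}$ the Bott cannibalistic class. Because $\mathrm{Spec}(k(x))$ is a point and $O_{X,x}$ is regular local of dimension $p$ (as $x\in X^{(p)}$), the bundle $N_{f}$ is trivial of rank $p$, so $\theta^{k}(N_{f})=\theta^{k}(\mathcal{O}_{k(x)})^{p}=k^{p}$ \emph{exactly} in $K_{0}(k(x))=\mathbb{Z}$: there are no higher-weight corrections precisely because the base is zero-dimensional, and this is the ``without denominators'' of the statement. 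Substituting gives $\psi^{k}(f_{\ast}y)=k^{p}\,f_{\ast}(\psi^{k}y)$, so $f_{\ast}$ intertwines the two $\psi^{k}$-actions up to a uniform shift of weight by $p$.

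The lemma then follows formally. If $y\in K_{m}^{(i)}(k(x))$, i.e.\ $\psi^{k}(y)=k^{i}y$ for all $k$, then $\psi^{k}(f_{\ast}y)=k^{p+i}(f_{\ast}y)$, so $f_{\ast}$ sends $K_{m}^{(i)}(k(x))$ into $K_{m}^{(p+i)}(O_{X,x}\ \mathrm{on}\ x)$ and is injective there. For surjectivity, given $z\in K_{m}^{(p+i)}(O_{X,x}\ \mathrm{on}\ x)$ write $z=f_{\ast}(y)$ and decompose $y=\sum_{j}y_{j}$ along the rational eigenspace decomposition; then $z=\sum_{j}f_{\ast}(y_{j})$ with $f_{\ast}(y_{j})$ of pure weight $p+j$, and uniqueness of the eigenspace decomposition of $z$ together with injectivity of $f_{\ast}$ forces $y_{j}=0$ for $j\neq i$, so $z=f_{\ast}(y_{i})$ with $y_{i}\in K_{m}^{(i)}(k(x))$. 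This yields the claimed isomorphism modulo torsion.

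The main obstacle is the Riemann--Roch input: either one cites Soul\'e's theorem for regular schemes essentially of finite type over a field and regular closed immersions, or one proves the needed case directly --- e.g.\ by choosing a regular system of parameters $t_{1},\dots,t_{p}$ for $\mathfrak{m}_{x}$, factoring $f$ through the chain of codimension-one regular immersions $\mathrm{Spec}(O_{X,x}/(t_{1},\dots,t_{j}))\hookrightarrow\mathrm{Spec}(O_{X,x}/(t_{1},\dots,t_{j-1}))$, and computing $\psi^{k}$ on the Koszul fundamental class at each step, where the normal bundle is a trivial line bundle and $\theta^{k}$ contributes exactly one factor $k$. One should also keep in mind that the eigenspace decompositions and the Riemann--Roch identity hold only rationally (modulo torsion), which is exactly the form in which the lemma is stated.
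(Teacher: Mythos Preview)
Your proposal is correct and follows essentially the same route as the paper: both invoke Soul\'e's Adams--Riemann--Roch theorem for the closed immersion $f$ and use that the normal-bundle correction collapses to the scalar $k^{p}$ (the paper writes this as $\psi^{k}(p,a)=k^{p}\psi^{k}(a)$, citing specific pages of \cite{S}, while you phrase it via $\theta^{k}(N_{f})=k^{p}$ for the trivial rank-$p$ bundle). Your write-up is more detailed---you supply the d\'evissage argument for the underlying isomorphism and an explicit surjectivity step that the paper's sketch omits---but the core mechanism is identical.
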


\begin{proof}
 We sketch the proof very briefly  as follows.
Given $a \in K^{(i)}_{m}(k(x))$, one has
\[
\psi^{k}(f_{\ast}(a)) = f_{\ast}(\psi^{k}(p,a)) = f_{\ast}(k^{p}\psi^{k}(a))= f_{\ast}(k^{p}k^{i}a)= k^{p+i}f_{\ast}(a),
\]
The first identity is from Theorem 3 of  \cite{S}(page 517), the second one uses the formula $\psi^{k}(p,a)= k^{p}\psi^{k}(a)$, see line 29 of page 522 of \cite{S}.
 Hence, $f_{\ast}(a) \in K^{(p+i)}_{m}(O_{X,x} \ \mathrm{on} \ x)$. 

See line 16- 30 of page 522 and line 8-11 of page 527 of \cite{S} for related discussion if necessary.
\end{proof}

This lemma says that our definition of Milnor K-group with support is a honest generalization of the classical one, at least for regular case. 

Next, we would like to define Milnor K-theoretic Chow groups of derived categories of schemes by mimicking Definition ~\ref{definition: K-theoretic Chowgp}. In order to do that, we need to detect whether the differentials of the Gersten complex respect Adams operations. 

 If the differentials $d_{1, X}^{p,-q}$ of the Gersten complex in Definition ~\ref{definition:Gersten} respect Adams' operations, for every $i \in \mathbb{Z}$, then there exists the following finer complex  
{\footnotesize
\begin{align*}
 0 \to & \bigoplus_{x \in X^{(0)}}K_{q}^{(i)}(O_{X,x}) \to \dots \to \bigoplus_{x \in X^{(q-1)}}K_{1}^{(i)}(O_{X,x} \ \mathrm{on} \ x) 
   \xrightarrow{d_{1,X}^{q-1,-q}} \bigoplus_{x \in X^{(i)}}K_{0}^{(i)}(O_{X,x} \ \mathrm{on} \ x) \\  &\xrightarrow{d_{1,X}^{q,-q}} \bigoplus_{x \in X^{(q+1)}}K_{-1}^{(i)}(O_{X,x} \ \mathrm{on} \ x) \to \dots \to \bigoplus_{x \in X^{(d)}}K_{q-d}^{(i)}(O_{X,x} \ \mathrm{on} \ x) \to 0.
\end{align*}
}

In particular, we obtain the following refiner complex by taking $i=q$:
{\footnotesize
\begin{align*}
 0 \to & \bigoplus_{x \in X^{(0)}}K_{q}^{(q)}(O_{X,x})  \to \dots \to \bigoplus_{x \in X^{(q-1)}}K_{1}^{(q)}(O_{X,x} \ \mathrm{on} \ x) 
   \xrightarrow{d_{1,X}^{q-1,-q}} \bigoplus_{x \in X^{(q)}}K_{0}^{(q)}(O_{X,x} \ \mathrm{on} \ x) \\  
   &\xrightarrow{d_{1,X}^{q,-q}} \bigoplus_{x \in X^{(q+1)}}K_{-1}^{(q)}(O_{X,x} \ \mathrm{on} \ x) \to 
 \dots \to \bigoplus_{x \in X^{(d)}}K_{q-d}^{(q)}(O_{X,x} \ \mathrm{on} \ x) \to 0.
\end{align*}
}

Tensoring each term with $\mathbb{Q}$, this complex can be written as 
{\footnotesize
\begin{align}
 0 \to & \bigoplus_{x \in X^{(0)}}K_{q}^{M}(O_{X,x})  \to \dots \to \bigoplus_{x \in X^{(q-1)}}K_{1}^{M}(O_{X,x} \ \mathrm{on} \ x) 
   \xrightarrow{d_{1,X}^{q-1,-q}} \bigoplus_{x \in X^{(q)}}K_{0}^{M}(O_{X,x} \ \mathrm{on} \ x) \\ 
   &\xrightarrow{d_{1,X}^{q,-q}} \bigoplus_{x \in X^{(q+1)}}K_{-1}^{M}(O_{X,x} \ \mathrm{on} \ x) \to \dots
   \dots \to \bigoplus_{x \in X^{(d)}}K_{q-d}^{M}(O_{X,x} \ \mathrm{on} \ x) \to 0. \nonumber
\end{align} 
}

\begin{definition} \label{definition: MilnorKChow}
For $X$ an equidimensional noetherian scheme over a field of finite Krull dimension $d$, \textbf{if} the differentials $d_{1,X}^{p,-q}$ of the Gersten complex respect Adams operations, then 
the Milnor K-theoretic $q$-cycles and Milnor K-theoretic rational equivalence of $(X, O_{X})$, denoted $Z^{M}_{q}(D^{\mathrm{perf}}(X))$ and $Z^{M}_{q,\mathrm{rat}}(D^{\mathrm{perf}}(X))$ respectively, are defined to be 
\[
  Z^{M}_{q}(D^{\mathrm{perf}}(X)):= \mathrm{Ker}(d_{1,X}^{q,-q}), Z^{M}_{q,\mathrm{rat}}(D^{\mathrm{perf}}(X)):=\mathrm{Im}(d_{1,X}^{q-1,-q}), 
\]
where $d_{1,X}^{q-1,-q}$ and $d_{1,X}^{q,-q}$ are the differentials of the complex $\mathrm{(3.1)}$.

The $q$-th Milnor K-theoretic Chow group  of $(X,O_{X})$, denoted by $CH^{M}_{q}(D^{\mathrm{perf}}(X))$, is defined to be 
\[
  CH^{M}_{q}(D^{\mathrm{perf}}(X)):= \dfrac{Z^{M}_{q}(D^{\mathrm{perf}}(X))}{Z^{M}_{q,\mathrm{rat}}(D^{\mathrm{perf}}(X))}.
\]
\end{definition}

In the following, we show that this definition makes sense for smooth projective varieties and their trivial infinitesimal  thickenings.

\subsection{Goodwillie  isomorphism}
\label{Goodwillie isomorphism}
Let's recall that in \cite{G-1} Goodwillie shows the relative 
Chern character is an isomorphism between the relative K-group $K_{n}(A,I)$ and negative cyclic homology $HN_{n}(A,I)$, where $A$ is a commutative $\mathbb{Q}$-algebra and $I$
is a nilpotent ideal in $A$. This is further studied by Cathelineau. 
\begin{theorem} \cite{C,G-1} \label{theorem: GoodwillieCathelineau}
Let $I$ be a nilpotent ideal in a commutative $\mathbb{Q}$-algebra $A$, the relative Chern character induces an isomorphism between the relative K-group $K_{n}(A,I)$ and negative cyclic homology $HN_{n}(A,I)$:
\[
  \mathrm{Ch}: K_{n}(A,I)_{\mathbb{Q}} \xrightarrow{\cong} HN_{n}(A,I).
\]
Furthermore, the relative Chern character respects Adams operations. That is,
\[
\mathrm{Ch}:  K_{n}^{(i)}(A,I)_{\mathbb{Q}}  \xrightarrow{\cong} HN_{n}^{(i)}(A,I),
\]
here $K_{n}^{(i)}$ and $HN_{n}^{(i)}$ are respective eigenspaces of $\psi^{k}=k^{i}$ and $\psi^{k}=k^{i+1}$.
\end{theorem}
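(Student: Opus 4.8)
The plan is to establish the two assertions in turn, taking the bare isomorphism $\mathrm{Ch}\colon K_{n}(A,I)_{\mathbb{Q}} \xrightarrow{\cong} HN_{n}(A,I)$ first and then refining it to Adams eigenspaces. For the isomorphism I would follow Goodwillie's argument in \cite{G-1}. The first move is to reduce to a square-zero ideal: the filtration $I \supset I^{2} \supset \cdots \supset I^{m} = 0$ produces a tower of surjections $A/I^{j+1} \twoheadrightarrow A/I^{j}$ with square-zero kernels $I^{j}/I^{j+1}$, and both $K(-,-)_{\mathbb{Q}}$ and $HN(-,-)$ turn this tower into a finite filtration of the relative spectra, so a five-lemma induction reduces everything to the case $A = B \oplus M$ with $M^{2}=0$. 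There one invokes the technical core of \cite{G-1}: rationally the relative $K$-theory $K(B\oplus M, M)$ is a stably additive (linear) functor of the module $M$, and the Goodwillie--Jones Chern character identifies this linearization with the corresponding linearization of negative cyclic homology. Equivalently, one may import Goodwillie's theorem $K_{n}(A,I)_{\mathbb{Q}} \cong HC_{n-1}(A,I)_{\mathbb{Q}}$ as a black box and combine it with the nilpotent invariance of periodic cyclic homology $HP_{n}(A,I)_{\mathbb{Q}} = 0$; feeding this into the $SBI$-sequence gives $HN_{n}(A,I) \cong HC_{n-1}(A,I)_{\mathbb{Q}}$, which matches the shift in Goodwillie's identification. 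Throughout one should use the multiplicative Goodwillie--Jones Chern character, so that these identifications are maps of ring spectra, hence compatible with products and with $\lambda$-structures.

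For the eigenspace refinement I would appeal to Cathelineau \cite{C}. Adams operations act on $K_{n}^{(i)}(A,I)_{\mathbb{Q}}$ through the $\lambda$-structure, extended to relative and to negative degrees exactly as in the discussion preceding Theorem~\ref{theorem: Soule}, and on $HN_{n}(A,I)$ through the Hodge, or $\lambda$-, decomposition $HN_{n}(A,I) = \bigoplus_{i} HN_{n}^{(i)}(A,I)$ coming from Loday's $\lambda$-operations on Hochschild homology. Cathelineau's theorem is precisely that the relative Chern character is an isomorphism summand by summand. The one genuine subtlety -- and the reason for the index convention in the statement, where $HN_{n}^{(i)}$ denotes the $\psi^{k}=k^{i+1}$-eigenspace rather than the $k^{i}$-eigenspace -- is the weight shift: the Dennis trace together with the periodicity operator shifts the Hodge weight by one, so a $K$-class in the $k^{i}$-eigenspace is carried into the summand one must index as the $k^{i+1}$-eigenspace on $HN$. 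I would pin down this normalization on a small universal case such as $K_{1}$ of a square-zero extension, where $\mathrm{Ch}$ is essentially $\mathrm{dlog}$, and then propagate it through the reductions above, all of which are weight-homogeneous.

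The main obstacle is not any isolated step but the square-zero linearization at the heart of \cite{G-1}, namely knowing that rational relative $K$-theory of a square-zero extension is an additive functor of the ideal and identifying that additive part with cyclic homology. If one instead imports Goodwillie's and Cathelineau's theorems wholesale, the remaining work is essentially bookkeeping: tracking the $\lambda$-operations and the single weight shift consistently with Soul\'e's conventions used elsewhere in this note, since an off-by-one there would corrupt the definition of $K_{m}^{M}(O_{X,x} \ \mathrm{on} \ x)$ and all that follows from it.
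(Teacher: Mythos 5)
The paper offers no internal proof of this statement: it is quoted directly from Goodwillie \cite{G-1} and Cathelineau \cite{C}, so the "proof" in the paper is exactly the appeal to those sources that you make. Your sketch correctly reconstructs their arguments — Goodwillie's rational isomorphism $K_{n}(A,I)_{\mathbb{Q}}\cong HC_{n-1}(A,I)_{\mathbb{Q}}$ combined with the vanishing of relative periodic cyclic homology and the $SBI$-sequence to pass to $HN_{n}(A,I)$, then Cathelineau's compatibility of the relative Chern character with the $\lambda$-decompositions including the weight shift responsible for the $\psi^{k}=k^{i+1}$ indexing on $HN_{n}^{(i)}$ — so it is consistent with, and fills in more detail than, what the paper does.
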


In the Appendix B of \cite{CHW}, Corti$\tilde{n}$as-Haesemeyer-Weibel shows a space version of Goodwillie isomorphism. For every  nilpotent sheaf of ideal $I$, they define $K(O,I)$ and $HN(O,I)$ as the following presheaves respectively:
\[
 U \rightarrow K(O(U),I(U)),  U \rightarrow HN(O(U),I(U))
\]

They write $\mathcal{K}(O,I)$ and $\mathcal{HN}(O,I)$ for the presheaves of spectrum whose initial spaces are $K(O,I)$ and $HN(O,I)$ respectively. 
Moreover, they define $\mathcal{K}^{(i)}(O,I)$ as the homotopy fiber of $\mathcal{K}(O,I)$ on which $\psi^{k}-k^{i}$ acts acyclicly. $\mathcal{HN}^{(i)}(O,I)$ is defined similarly. 
Goodwillie and Cathelineau's isomorphisms can be generalized in the following way.
\begin{theorem} \cite{CHW} \label{theorem: CHW}
 The relative Chern character induces homotopy equivalence of spectra:
 \begin{equation}
\begin{cases}
 \begin{CD}
 \mathrm{Ch}: \mathcal{K}(O,I) \simeq \mathcal{HN}(O,I), \\
 \mathrm{Ch}: \mathcal{K}^{(i)}(O,I) \simeq \mathcal{HN}^{(i)}(O,I).
 \end{CD}
\end{cases}
\end{equation} 

\end{theorem}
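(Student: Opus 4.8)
The statement to prove, Theorem \ref{theorem: CHW}, asserts that the relative Chern character globalizes to a homotopy equivalence of presheaves of spectra $\mathcal{K}(O,I) \simeq \mathcal{HN}(O,I)$, together with its Adams-graded refinement $\mathcal{K}^{(i)}(O,I) \simeq \mathcal{HN}^{(i)}(O,I)$. The plan is to reduce the global statement to the already-known affine statement of Theorem \ref{theorem: GoodwillieCathelineau} (Goodwillie--Cathelineau) by working sectionwise and then invoking that a map of presheaves of spectra inducing equivalences on all sections is itself an equivalence. Concretely, I would first recall that both $\mathcal{K}(O,I)$ and $\mathcal{HN}(O,I)$ are, by definition in \cite{CHW}, the presheaves of spectra associated to the sectionwise assignments $U \mapsto K(O(U),I(U))$ and $U \mapsto HN(O(U),I(U))$, and that the relative Chern character $\mathrm{Ch}$ is natural in the ring, hence defines a morphism of presheaves of spectra. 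Then, for each affine open $U = \mathrm{Spec}(A)$ on which $I$ restricts to a nilpotent ideal $I(U) \subset A = O(U)$ in a commutative $\mathbb{Q}$-algebra, Theorem \ref{theorem: GoodwillieCathelineau} gives $\mathrm{Ch}\colon K_n(A,I(U))_{\mathbb{Q}} \xrightarrow{\cong} HN_n(A,I(U))$ for all $n$; since $K_n(A,I(U))$ is already a $\mathbb{Q}$-vector space (the relative $K$-theory of a nilpotent ideal in a $\mathbb{Q}$-algebra is uniquely divisible by Weibel's work), this is an isomorphism on all homotopy groups, i.e. a stable equivalence of spectra on sections over an affine base.

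The second step is to pass from ``equivalence on affine sections'' to ``equivalence of presheaves.'' Here I would point out that in the relevant homotopy category of presheaves of spectra — equipped with the objectwise (or, after fibrant replacement, the Nisnevich-local) model structure used in \cite{CHW} — a morphism is an equivalence precisely when it induces isomorphisms on all stalks, or equivalently on all sections over a basis of the topology. Since the affine opens form such a basis and the relative Chern character is a sectionwise equivalence on them by the previous paragraph, the morphism $\mathrm{Ch}\colon \mathcal{K}(O,I) \to \mathcal{HN}(O,I)$ is an objectwise equivalence of presheaves of spectra, which is stronger than the asserted homotopy equivalence. (One subtlety: if $\mathcal{K}(O,I)$ and $\mathcal{HN}(O,I)$ are meant to be Nisnevich-sheafified, then the sheafification functor is exact and preserves equivalences, so the conclusion survives; this is exactly the sense in which \cite{CHW} works with these objects.)

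For the graded refinement I would use naturality of the Adams operations $\psi^k$ with respect to $\mathrm{Ch}$. The affine statement of Theorem \ref{theorem: GoodwillieCathelineau} already says $\mathrm{Ch}$ carries $K_n^{(i)}(A,I(U))_{\mathbb{Q}}$ isomorphically onto $HN_n^{(i)}(A,I(U))$, with the indexing shift (eigenvalue $k^i$ on $K$ matching $k^{i+1}$ on $HN$) built in. Since $\mathcal{K}^{(i)}(O,I)$ is defined in \cite{CHW} as the homotopy fiber of $\psi^k - k^i$ acting on $\mathcal{K}(O,I)$ — a construction that commutes with the formation of presheaves and with taking sections — and similarly for $\mathcal{HN}^{(i)}(O,I)$, the sectionwise equivalence $\mathrm{Ch}$ restricts to a sectionwise equivalence on these homotopy fibers over affines, hence to an equivalence of presheaves by the same basis argument. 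I expect the main obstacle to be purely bookkeeping: ensuring that ``homotopy fiber on which $\psi^k - k^i$ acts acyclically'' is genuinely functorial and stable under restriction to opens, so that the eigenspace decomposition is compatible with the sectionwise-to-global passage; once that functoriality is in place, everything else reduces to Theorem \ref{theorem: GoodwillieCathelineau} applied section by section, and there is essentially no new content beyond organizing the local-to-global argument correctly.
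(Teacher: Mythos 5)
The paper offers no proof of this statement at all---it is imported verbatim from Appendix B of \cite{CHW}---and your sectionwise reduction to the Goodwillie--Cathelineau theorem (Theorem~\ref{theorem: GoodwillieCathelineau}), together with the observation that relative $K$-groups of a nilpotent ideal in a $\mathbb{Q}$-algebra are already uniquely divisible so that no rationalization is needed, is exactly the standard argument underlying the cited result, including the weight shift $k^{i}$ versus $k^{i+1}$ and the compatibility of the eigen-pieces with taking sections. The one loose phrase is the claim that a stalkwise equivalence is ``equivalent'' to an equivalence on sections over a basis (false for general presheaves), but it is harmless here: since $O(U)$ is a commutative $\mathbb{Q}$-algebra and $I(U)$ is nilpotent for \emph{every} open $U$, the affine theorem applies to all sections and $\mathrm{Ch}$ is an objectwise equivalence outright.
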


Now, let $X$ be a noetherian scheme of finite type over a field $k$ of characteristic $0$. Let $Y$ be a closed subset in $X$ and $U = X - Y$ be the open complement. Let $\varepsilon$ be a nilpotent of order $n$: $\varepsilon ^{n} = 0$. 

Let $\mathbb{H}(X,\bullet)$ denote Thomason's hypercohomology of spectra. We have the following 
commutative diagrams(each column and row is a homotopy fibration):
\[
  \begin{CD}
     \mathbb{H}_{Y}(X, \mathcal{K}(O, \varepsilon)) @>>> \mathbb{H}(X, \mathcal{K}(O, \varepsilon)) @>>>  \mathbb{H}(U, \mathcal{K}(O, \varepsilon)) \\
     @VVV  @VVV   @VVV  \\
     \mathbb{H}_{Y}(X, \mathcal{K}(O_{X}[\varepsilon])) @>>> \mathbb{H}(X, \mathcal{K}(O_{X}[\varepsilon])) @>>>  \mathbb{H}(U, \mathcal{K}(O_{U}[\varepsilon])) \\
     @VVV  @VVV   @VVV  \\
     \mathbb{H}_{Y}(X, \mathcal{K}(O_{X})) @>>> \mathbb{H}(X, \mathcal{K}(O_{X})) @>>>  \mathbb{H}(U, \mathcal{K}(O_{U})) \\
  \end{CD}
\]
and
\[
  \begin{CD}
     \mathbb{H}_{Y}(X, \mathcal{HN}(O, \varepsilon)) @>>> \mathbb{H}(X, \mathcal{HN}(O, \varepsilon)) @>>>  \mathbb{H}(U, \mathcal{HN}(O, \varepsilon)) \\
     @VVV  @VVV   @VVV  \\
     \mathbb{H}_{Y}(X, \mathcal{HN}(O_{X}[\varepsilon])) @>>> \mathbb{H}(X, \mathcal{HN}(O_{X}[\varepsilon])) @>>>  \mathbb{H}(U, \mathcal{HN}(O_{U}[\varepsilon])) \\
     @VVV  @VVV   @VVV  \\
     \mathbb{H}_{Y}(X, \mathcal{HN}(O_{X})) @>>> \mathbb{H}(X, \mathcal{HN}(O_{X})) @>>>  \mathbb{H}(U, \mathcal{HN}(O_{U})). \\
  \end{CD}
\]

Since both $\mathcal{K}$ and $\mathcal{HN}$ satisfy Zariski descent, they satisfy
\[
\begin{cases}
 \begin{CD}
 \mathbb{H}_{Y}(X, \mathcal{K}(O_{X})) = \mathcal{K}(X \ \mathrm{on} \ Y), \ \mathbb{H}_{Y}(X, \mathcal{HN}(O_{X})) = \mathcal{HN}(X \ \mathrm{on} \ Y);  \\
 \mathbb{H}_{Y}(X, \mathcal{K}(O_{X}[\varepsilon])) = \mathcal{K}(X[\varepsilon] \ \mathrm{on} \ Y), \ \mathbb{H}_{Y}(X, \mathcal{HN}(O_{X}[\varepsilon])) = \mathcal{HN}(X[\varepsilon] \ \mathrm{on} \ Y).
 \end{CD}
\end{cases}
\]

 The above diagrams show the following result
\begin{corollary}  \label{corollary: hmtpFiber}
$\mathbb{H}_{Y}(X, \mathcal{K}(O, \varepsilon))$ and $\mathbb{H}_{Y}(X, \mathcal{HN}(O, \varepsilon))$ are the homotpy fibres of, respectively
\[
 \mathbb{H}_{Y}(X, \mathcal{K}(O_{X}[\varepsilon])) \rightarrow \mathbb{H}_{Y}(X, \mathcal{K}(O_{X})),  \mathbb{H}_{Y}(X, \mathcal{HN}(O_{X}[\varepsilon])) \rightarrow \mathbb{H}_{Y}(X, \mathcal{HN}(O_{X})).
\]
\end{corollary}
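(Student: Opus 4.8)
The plan is to read the statement off from the two commutative diagrams of homotopy fibrations that immediately precede it, using only formal properties of homotopy fibres in a stable setting. Recall that in each of the two big $3\times 3$ diagrams every row and every column is a homotopy fibration of spectra, and that by construction $\mathcal{K}(O,\varepsilon)$ (resp. $\mathcal{HN}(O,\varepsilon)$) is the presheaf of spectra fitting into the fibration $\mathcal{K}(O,\varepsilon)\to\mathcal{K}(O_{X}[\varepsilon])\to\mathcal{K}(O_{X})$ (resp. the $\mathcal{HN}$ analogue) — this is exactly the defining property of the relative theory in Theorem~\ref{theorem: CHW}. Hypercohomology with supports $\mathbb{H}_{Y}(X,-)$ is itself defined as a homotopy fibre (of $\mathbb{H}(X,-)\to\mathbb{H}(U,-)$), and it sends a homotopy fibration of presheaves of spectra to a homotopy fibration of spectra, because it is a finite homotopy limit and finite homotopy limits commute with homotopy limits.

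The key steps, in order, are as follows. First I would observe that applying the exact functor $\mathbb{H}_{Y}(X,-)$ levelwise to the homotopy fibration of presheaves $\mathcal{K}(O,\varepsilon)\to\mathcal{K}(O_{X}[\varepsilon])\to\mathcal{K}(O_{X})$ yields a homotopy fibration
\[
 \mathbb{H}_{Y}(X,\mathcal{K}(O,\varepsilon))\to\mathbb{H}_{Y}(X,\mathcal{K}(O_{X}[\varepsilon]))\to\mathbb{H}_{Y}(X,\mathcal{K}(O_{X})),
\]
and likewise for $\mathcal{HN}$; equivalently, this is the left-hand column of each $3\times 3$ diagram, and a $3\times 3$ lemma for homotopy fibrations of spectra guarantees that column is a fibration once the other two columns and all three rows are. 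Second, I would invoke the Zariski-descent identifications displayed just before the corollary, namely $\mathbb{H}_{Y}(X,\mathcal{K}(O_{X}))=\mathcal{K}(X\ \mathrm{on}\ Y)$ and $\mathbb{H}_{Y}(X,\mathcal{K}(O_{X}[\varepsilon]))=\mathcal{K}(X[\varepsilon]\ \mathrm{on}\ Y)$, and their $\mathcal{HN}$ counterparts, to rewrite the two outer terms of each fibration in the stated form. Third, by the defining (universal) property of a homotopy fibre, being the fibre of $\mathbb{H}_{Y}(X,\mathcal{K}(O_{X}[\varepsilon]))\to\mathbb{H}_{Y}(X,\mathcal{K}(O_{X}))$ characterizes $\mathbb{H}_{Y}(X,\mathcal{K}(O,\varepsilon))$ up to canonical equivalence, which is precisely the assertion; the same argument in the $\mathcal{HN}$ diagram gives the second claim.

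The only point requiring a little care — and the one I would expect to be the main obstacle — is justifying that $\mathbb{H}_{Y}(X,-)$ preserves homotopy fibrations of presheaves of spectra, i.e. that one may apply hypercohomology with supports to the short homotopy fibration defining the relative theory and still get a fibration. This follows because $\mathbb{H}_{Y}(X,-)$ is a composite of homotopy limits: Thomason hypercohomology $\mathbb{H}(X,-)$ is a homotopy limit over the Zariski site, and $\mathbb{H}_{Y}(X,-)$ is then the homotopy fibre of $\mathbb{H}(X,-)\to\mathbb{H}(U,-)$; homotopy limits of spectra commute with one another, and in particular preserve homotopy fibre sequences. Since $\mathcal{K}$ and $\mathcal{HN}$ both satisfy Zariski descent, there is no discrepancy between the naive and the derived (hypercohomology) versions of these constructions, so the identifications used in the second step are legitimate. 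With these formalities in place the corollary is immediate, and in fact it is nothing more than a restatement of the left column of each $3\times 3$ diagram together with the descent identifications of the two lower-left and middle-left entries.
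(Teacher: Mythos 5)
Your proposal is correct and follows essentially the same route as the paper: the paper simply asserts that the two $3\times 3$ diagrams (whose rows and columns are homotopy fibrations, together with the Zariski-descent identifications stated just before the corollary) yield the result, which is exactly your reading of the left-hand columns. Your added justification that $\mathbb{H}_{Y}(X,-)$ preserves homotopy fibrations, being a (homotopy) limit construction, is the same fact the paper itself invokes a few lines later when it applies $\mathbb{H}_{Y}(X,-)$ to the splitting fibrations, so there is no divergence in method.
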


Let $K_{n}(X[\varepsilon] \ \mathrm{on} \ Y[\varepsilon], \varepsilon)$ and  $HN_{n}(X[\varepsilon] \ \mathrm{on} \ Y[\varepsilon], \varepsilon)$denote the relative groups with support, that is, the kernel of the natural projections:
\[
  K_{n}(X[\varepsilon] \ \mathrm{on} \ Y[\varepsilon]) \xrightarrow{\varepsilon =0} K_{n}(X \  \mathrm{on} \ Y),  HN_{n}(X[\varepsilon] \ \mathrm{on} \ Y[\varepsilon]) \xrightarrow{\varepsilon =0} HN_{n}(X \  \mathrm{on} \ Y).
\]
In other words, $K_{n}(X[\varepsilon] \ \mathrm{on} \ Y[\varepsilon], \varepsilon) = \mathbb{H}^{-n}_{Y}(X, \mathcal{K}(O, \varepsilon))$ and $HN_{n}(X[\varepsilon] \ \mathrm{on} \ Y[\varepsilon], \varepsilon) = \mathbb{H}^{-n}_{Y}(X, \mathcal{HN}(O, \varepsilon))$. So we have the following corollary of Theorem ~\ref{theorem: CHW}:
\begin{corollary} \label{corollary: GoodwillieWithSupport}
Let $X$ be a noetherian scheme of finite type over a field $k$ of characteristic $0$. Let $Y$ be a closed subset in $X$ and  let $\varepsilon$ be a nilpotent of order $n$: $\varepsilon ^{n} = 0$. 
We have
\[
 K_{n}(X[\varepsilon] \ \mathrm{on}  \ Y[\varepsilon], \varepsilon)_{\mathbb{Q}} = HN_{n}(X[\varepsilon] \ \mathrm{on}  \ Y[\varepsilon], \varepsilon)_{\mathbb{Q}}.
\]
\end{corollary}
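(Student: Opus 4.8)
The plan is to obtain the isomorphism by applying Thomason hypercohomology with supports to the presheaf-level relative Chern character of Theorem~\ref{theorem: CHW} and then passing to homotopy groups. Concretely, set $O = O_{X}[\varepsilon]$ and let $I = (\varepsilon)$ be the nilpotent sheaf of ideals, so that $I^{n} = 0$ and $O/I = O_{X}$. Theorem~\ref{theorem: CHW} then supplies a homotopy equivalence of presheaves of spectra $\mathrm{Ch}\colon \mathcal{K}(O,\varepsilon) \simeq \mathcal{HN}(O,\varepsilon)$ on the small Zariski site of $X$, natural in $X$ and in particular compatible with restriction to the open complement $U = X - Y$ and with the augmentation $\varepsilon = 0$.

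Next I would apply $\mathbb{H}_{Y}(X,-)$. Because $\mathbb{H}_{Y}(X,\mathcal{F})$ is, by construction, the homotopy fibre of $\mathbb{H}(X,\mathcal{F}) \to \mathbb{H}(U,\mathcal{F})$, and both $\mathbb{H}(X,-)$ and $\mathbb{H}(U,-)$ carry objectwise equivalences of presheaves of spectra to equivalences, the relative Chern character induces an equivalence
\[
 \mathrm{Ch}\colon \mathbb{H}_{Y}(X, \mathcal{K}(O,\varepsilon)) \xrightarrow{\ \simeq\ } \mathbb{H}_{Y}(X, \mathcal{HN}(O,\varepsilon)).
\]
Equivalently, one may quote Corollary~\ref{corollary: hmtpFiber} directly: it identifies the two sides as the homotopy fibres of $\mathbb{H}_{Y}(X, \mathcal{K}(O_{X}[\varepsilon])) \to \mathbb{H}_{Y}(X, \mathcal{K}(O_{X}))$ and of $\mathbb{H}_{Y}(X, \mathcal{HN}(O_{X}[\varepsilon])) \to \mathbb{H}_{Y}(X, \mathcal{HN}(O_{X}))$, and naturality of $\mathrm{Ch}$ with respect to $\varepsilon = 0$, together with the Zariski-descent identifications $\mathbb{H}_{Y}(X, \mathcal{K}(O_{X}[\varepsilon])) = \mathcal{K}(X[\varepsilon] \ \mathrm{on} \ Y)$ and the like recorded in the excerpt, produces the required map of vertical homotopy fibrations and hence the equivalence of fibres.

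Finally, I would take $\pi_{n}$. By the identifications recalled just before the statement, $\pi_{n}\,\mathbb{H}_{Y}(X, \mathcal{K}(O,\varepsilon)) = \mathbb{H}^{-n}_{Y}(X, \mathcal{K}(O,\varepsilon)) = K_{n}(X[\varepsilon] \ \mathrm{on} \ Y[\varepsilon], \varepsilon)$ and likewise $\pi_{n}\,\mathbb{H}_{Y}(X, \mathcal{HN}(O,\varepsilon)) = HN_{n}(X[\varepsilon] \ \mathrm{on} \ Y[\varepsilon], \varepsilon)$, so the equivalence above yields the stated isomorphism after tensoring with $\mathbb{Q}$ — the right-hand side is already a $\mathbb{Q}$-vector space, while the left-hand side must be rationalized precisely because $\mathrm{Ch}$ is only a rational equivalence, as in Theorem~\ref{theorem: GoodwillieCathelineau}. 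The one point that requires genuine care, rather than formal manipulation of homotopy fibrations, is the second step: one must know that $\mathrm{Ch}$ is natural enough — compatible with Zariski restriction and with the augmentation $O_{X}[\varepsilon] \to O_{X}$ — to induce an actual map between the two displayed vertical homotopy fibrations, not merely an abstract equivalence of their total spaces; granting this, and Zariski descent for $\mathcal{K}$ and $\mathcal{HN}$ (already invoked in the excerpt), the remainder is the five lemma for homotopy fibrations.
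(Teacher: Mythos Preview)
Your proposal is correct and follows essentially the same route as the paper: the paper presents this corollary as immediate from Theorem~\ref{theorem: CHW} via the hypercohomology-with-supports framework and the identifications set up in Corollary~\ref{corollary: hmtpFiber} and the surrounding paragraphs, and your argument spells out precisely those steps. Your added remark about the naturality of $\mathrm{Ch}$ with respect to restriction and the augmentation is a fair point of care, but it is exactly what the paper is tacitly using when it draws the two $3\times 3$ diagrams of homotopy fibrations.
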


According to the Appendix B of \cite{CHW}, there exists  the following two splitting fibrations
\begin{equation}
\begin{cases}
 \begin{CD}
  \mathcal{K}^{(i)}(O, \varepsilon) \rightarrow \mathcal{K}(O, \varepsilon) \rightarrow \prod_{j\neq i}\mathcal{K}^{(j)}(O, \varepsilon), \\
 \mathcal{HN}^{(i)}(O, \varepsilon) \rightarrow \mathcal{HN}(O, \varepsilon) \rightarrow \prod_{j\neq i}\mathcal{HN}^{(j)}(O, \varepsilon).
 \end{CD}
\end{cases}
\end{equation} 

Sine taking $\mathbb{H}_{Y}(X,-)$ perserves homotopy fibrations, there exists the following two splitting fibrations:
\begin{equation}
\begin{cases}
 \begin{CD}
 \mathbb{H}_{Y}(X, \mathcal{K}^{(i)}(O, \varepsilon)) \to \mathbb{H}_{Y}(X, \mathcal{K}(O, \varepsilon))  \xrightarrow{\psi^{k}-k^{i}}    \mathbb{H}_{Y}(X ,\prod_{j\neq i}\mathcal{K}^{(j)}(O, \varepsilon)), \\
 \mathbb{H}_{Y}(X, \mathcal{HN}^{(i)}(O, \varepsilon)) \to \mathbb{H}_{Y}(X, \mathcal{HN}(O, \varepsilon))  \xrightarrow{\psi^{k}-k^{i+1}}    \mathbb{H}_{Y}(X,\prod_{j\neq i}\mathcal{HN}^{(j)}(O, \varepsilon )).
 \end{CD}
\end{cases}
\end{equation} 

Passing to group level, we obtain the following results:
\begin{equation}
\begin{cases}
 \begin{CD}
 \mathbb{H}^{-n}_{Y}(X, \mathcal{K}^{(i)}(O, \varepsilon))_{\mathbb{Q}} = \{x \in \mathbb{H}^{-n}_{Y}(X, \mathcal{K}(O, \varepsilon))| \psi^{k}(x)-k^{i}x=0 \}, \\
 \mathbb{H}^{-n}_{Y}(X, \mathcal{HN}^{(i)}(O, \varepsilon))_{\mathbb{Q}} = \{x \in \mathbb{H}^{-n}_{Y}(X, \mathcal{HN}(O, \varepsilon))| \psi^{k}(x)-k^{i+1}x=0 \}.
 \end{CD}
\end{cases}
\end{equation} 

Let $K^{(i)}_{n}(X[\varepsilon] \ \mathrm{on} \ Y[\varepsilon], \varepsilon)_{\mathbb{Q}}$ and  $HN^{(i)}_{n}(X[\varepsilon] \ \mathrm{on} \ Y[\varepsilon], \varepsilon)_{\mathbb{Q}}$ denote the weight $i$ eigenspaces of relative groups with support, that is, the kernel of the natural projections:
\[
  K^{(i)}_{n}(X[\varepsilon] \ \mathrm{on} \ Y[\varepsilon])_{\mathbb{Q}} \xrightarrow{\varepsilon =0} K^{(i)}_{n}(X \  \mathrm{on} \ Y)_{\mathbb{Q}},  HN^{(i)}_{n}(X[\varepsilon] \ \mathrm{on} \ Y[\varepsilon])_{\mathbb{Q}} \xrightarrow{\varepsilon =0} HN^{(i)}_{n}(X \  \mathrm{on} \ Y)_{\mathbb{Q}}.
\]
Then we have the following identifications:
\begin{equation}
\begin{cases}
 \begin{CD}
 \mathbb{H}^{-n}_{Y}(X, \mathcal{K}^{(i)}(O, \varepsilon))_{\mathbb{Q}} = K^{(i)}_{n}(X[\varepsilon] \ \mathrm{on} \ Y[\varepsilon], \varepsilon)_{\mathbb{Q}}, \\
 \mathbb{H}^{-n}_{Y}(X, \mathcal{HN}^{(i)}(O, \varepsilon))_{\mathbb{Q}} = HN^{(i)}_{n}(X[\varepsilon] \ \mathrm{on} \ Y[\varepsilon], \varepsilon)_{\mathbb{Q}}.
 \end{CD}
\end{cases}
\end{equation} 

Therefore, the homotopy equivalences
\[ 
  \mathcal{K}(O, \varepsilon) \simeq \mathcal{HN}(O, \varepsilon),  \mathcal{K}^{(i)}(O, \varepsilon) \simeq\mathcal{HN}^{(i)}(O, \varepsilon)
\]
give us the following finer result:
\begin{theorem} \label{theorem: EigenGoodwillieSupport}
Let $X$ be a noetherian scheme of finite type over a field $k$ of characteristic $0$. Let $Y$ be a closed subset in $X$ and let $\varepsilon$ be a nilpotent of order $n$: $\varepsilon ^{n} = 0$, we have the following identification:
\[
 K_{n}^{(i)}(X[\varepsilon] \ \mathrm{on} \ Y[\varepsilon],\varepsilon)_{\mathbb{Q}} = HN_{n}^{(i)}(X[\varepsilon] \ \mathrm{on} \ Y[\varepsilon],\varepsilon)_{\mathbb{Q}}.
\]
\end{theorem}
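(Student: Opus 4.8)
\section*{Proof proposal}

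The plan is to obtain the statement by applying Thomason hypercohomology with supports to the space-level Goodwillie isomorphism of Theorem~\ref{theorem: CHW}, and then reading off homotopy groups; all the intermediate pieces have already been assembled above, so the argument is essentially a matter of bookkeeping. First I would start from the homotopy equivalence of presheaves of spectra $\mathrm{Ch}\colon \mathcal{K}^{(i)}(O,\varepsilon)\xrightarrow{\ \simeq\ }\mathcal{HN}^{(i)}(O,\varepsilon)$ furnished by Theorem~\ref{theorem: CHW}. Since $\mathbb{H}_{Y}(X,-)$ is computed by a locally fibrant replacement, it carries a levelwise (hence local) weak equivalence of presheaves of spectra to a weak equivalence of spectra; therefore
\[
 \mathbb{H}_{Y}(X,\mathcal{K}^{(i)}(O,\varepsilon))\ \simeq\ \mathbb{H}_{Y}(X,\mathcal{HN}^{(i)}(O,\varepsilon)).
\]
Taking $\pi_{n}=\mathbb{H}^{-n}_{Y}(X,-)$ and tensoring with $\mathbb{Q}$ then gives an isomorphism
\[
 \mathbb{H}^{-n}_{Y}(X,\mathcal{K}^{(i)}(O,\varepsilon))_{\mathbb{Q}}\ \cong\ \mathbb{H}^{-n}_{Y}(X,\mathcal{HN}^{(i)}(O,\varepsilon))_{\mathbb{Q}}.
\]

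Next I would identify the two sides of this isomorphism with the relative eigenspaces with support appearing in the statement. This is precisely the chain of identifications developed just above: applying the exact functor $\mathbb{H}_{Y}(X,-)$ to the splitting fibration $\mathcal{K}^{(i)}(O,\varepsilon)\to\mathcal{K}(O,\varepsilon)\xrightarrow{\psi^{k}-k^{i}}\prod_{j\neq i}\mathcal{K}^{(j)}(O,\varepsilon)$ produces a splitting fibration of spectra, whose long exact sequence of homotopy groups, after $\otimes\,\mathbb{Q}$, exhibits $\mathbb{H}^{-n}_{Y}(X,\mathcal{K}^{(i)}(O,\varepsilon))_{\mathbb{Q}}$ as the $\psi^{k}=k^{i}$ eigenspace of $\mathbb{H}^{-n}_{Y}(X,\mathcal{K}(O,\varepsilon))_{\mathbb{Q}}$. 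By Corollary~\ref{corollary: hmtpFiber} and Zariski descent the latter group is $K_{n}(X[\varepsilon]\ \mathrm{on}\ Y[\varepsilon],\varepsilon)_{\mathbb{Q}}$, so its $\psi^{k}=k^{i}$ eigenspace is by definition $K_{n}^{(i)}(X[\varepsilon]\ \mathrm{on}\ Y[\varepsilon],\varepsilon)_{\mathbb{Q}}$. The same reasoning on the $\mathcal{HN}$ side, with the customary weight shift $\psi^{k}=k^{i+1}$, identifies the right-hand group with $HN_{n}^{(i)}(X[\varepsilon]\ \mathrm{on}\ Y[\varepsilon],\varepsilon)_{\mathbb{Q}}$. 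Combining these identifications with the displayed isomorphism yields the asserted equality.

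The one step that requires genuine care, and which I expect to be the main obstacle, is the eigenspace bookkeeping: one must check that the Adams operations on $\mathbb{H}^{-n}_{Y}(X,\mathcal{K}(O,\varepsilon))$ induced from the presheaf-level operations of \cite{CHW} agree, after rationalization, with Soul\'e's Adams operations on relative $K$-theory with supports that are used to define $K_{n}^{(i)}(X[\varepsilon]\ \mathrm{on}\ Y[\varepsilon],\varepsilon)_{\mathbb{Q}}$, and that the formula ``homotopy fiber on which $\psi^{k}-k^{i}$ acts acyclically'' genuinely descends to ``$\psi^{k}=k^{i}$ eigenspace'' on $\pi_{n}\otimes\mathbb{Q}$. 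The latter is clear once one knows $\psi^{k}-k^{i}$ remains a self-equivalence of $\mathbb{H}_{Y}(X,\prod_{j\neq i}\mathcal{K}^{(j)}(O,\varepsilon))$, which is automatic because it is a self-equivalence of the presheaf $\prod_{j\neq i}\mathcal{K}^{(j)}(O,\varepsilon)$ and $\mathbb{H}_{Y}(X,-)$ is a functor; the compatibility of the two families of Adams operations is the part I would spell out, appealing to the naturality of $\psi^{k}$ under localization and to the fact that both are defined via the same Bass-style descending induction. Granting this, the remainder of the proof is formal consequence of the exactness of hypercohomology with supports and Theorem~\ref{theorem: CHW}.
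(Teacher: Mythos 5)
Your proposal is correct and follows essentially the same route as the paper: apply $\mathbb{H}_{Y}(X,-)$ to the equivalences $\mathcal{K}^{(i)}(O,\varepsilon)\simeq\mathcal{HN}^{(i)}(O,\varepsilon)$ of Theorem~\ref{theorem: CHW} and to the splitting fibrations, then identify the resulting homotopy groups, after tensoring with $\mathbb{Q}$, with the relative eigenspace groups with support via Corollary~\ref{corollary: hmtpFiber} and Zariski descent. The eigenspace bookkeeping you flag is handled in the paper simply by defining the groups $K_{n}^{(i)}(X[\varepsilon]\ \mathrm{on}\ Y[\varepsilon],\varepsilon)_{\mathbb{Q}}$ through the same presheaf-level operations, so no extra compatibility check is needed there.
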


Let $R$ be a regular noetherian domain, which is also a commutative $\mathbb{Q}$-algebra, and let $\varepsilon$ be a nilpotent of order $n$: $\varepsilon ^{n} = 0$. We consider $R[\varepsilon]$ as a graded $\mathbb{Q}$-algebra, it is known that the relative negative cyclic homology $HN_{m}^{(i)}(R[\varepsilon],\varepsilon)$ can be identified with absolute differentials as follows:
\begin{equation}  \label{corollary: EigenNegCycNegCyclic}
\begin{cases}
 HN_{m}^{(i)}(R[\varepsilon],\varepsilon) & = (\Omega^{{2i-m-1}}_{R/ \mathbb{Q}})^{ \oplus n-1}, \mathrm{for} \ \frac{m}{2} < i \leq m;\\
 HN_{m}^{(i)}(R[\varepsilon],\varepsilon) & = 0, \mathrm{else}.
\end{cases}
\end{equation} 

 \begin{equation}
 HN_{m}(R[\varepsilon],\varepsilon) = (\Omega^{{m-1}}_{R/ \mathbb{Q}}\oplus \Omega^{{m-3}}_{R/ \mathbb{Q}} \oplus \dots)^{\oplus n-1},
\end{equation}
the last term in the direct sum is $\Omega^{{1}}_{R/ \mathbb{Q}}$ or $R$, depending on $m$ even or odd.  E.g., Geller-Weibel \cite{GW} computed the eigenspace of K-groups of truncated polynomials, see Application 5.6 on page 29. See also the survey by Hesselholt \cite{He}.

 Now, we explicitly compute the relative negative cyclic groups with support.

\begin{theorem} \label{theorem: RelNegCycSupport}
Let $X$ be a smooth projective variety over a field $k$ of characteristic $0$ and $y \in X^{(p)}$.  Let $\varepsilon$ be a nilpotent of order $n$: $\varepsilon ^{n} = 0$. 
For any integer $m$,  we have 
\begin{equation}
\begin{cases}
 \begin{CD}
 HN_{m}(O_{X,y}[\varepsilon] \ \mathrm{on} \ y[\varepsilon],\varepsilon) \cong H_{y}^{p}(\Omega^{\bullet}_{X/\mathbb{Q}}), \\
  HN^{(i)}_{m}(O_{X,y}[\varepsilon] \ \mathrm{on} \ y[\varepsilon],\varepsilon) \cong H_{y}^{p}(\Omega^{\bullet,(i)}_{X/\mathbb{Q}}),
 \end{CD}
\end{cases}
\end{equation} 
where $(\Omega^{\bullet}_{X/\mathbb{Q}}=\Omega^{m+p-1}_{X/\mathbb{Q}}\oplus \Omega^{m+p-3}_{X/\mathbb{Q}}\oplus \dots)^{\oplus n-1}$ and 
\begin{equation}
\begin{cases}
 \Omega_{X/ \mathbb{Q}}^{\bullet,(i)} & = (\Omega^{{2i-(m+p)-1}}_{X/ \mathbb{Q}})^{\oplus n-1}, \mathrm{for} \  \frac{m+p}{2}  < \ i \leq m+p,\\
  \Omega_{X/ \mathbb{Q}}^{\bullet,(i)} & = 0, \mathrm{else}.
\end{cases}
\end{equation} 
\end{theorem}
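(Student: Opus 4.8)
The statement is a local computation of relative negative cyclic homology with support, so the natural strategy is to reduce it to the already-stated affine/local computation for a regular $\mathbb{Q}$-algebra, namely equations (3.2)--(3.3) above, and then transport the answer through local cohomology. First I would localize at $y$: since $X$ is smooth of finite type over $k$ with $\operatorname{char} k = 0$, the local ring $R := O_{X,y}$ is a regular noetherian domain and a commutative $\mathbb{Q}$-algebra, so Theorem~\ref{theorem: GoodwillieCathelineau} and equations (3.2)--(3.3) apply verbatim to $R[\varepsilon]$. The point $y \in X^{(p)}$ becomes the closed point of $\operatorname{Spec}(R)$, and I want to identify $HN_m(R[\varepsilon] \ \mathrm{on}\ \mathfrak{m}[\varepsilon], \varepsilon)$ with a local cohomology group of the (graded-decomposed) de Rham complex.

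**Key steps in order.** (1) Use that $\mathcal{HN}$ (and hence $\mathcal{HN}(O,\varepsilon)$ and each eigenspace $\mathcal{HN}^{(i)}(O,\varepsilon)$) satisfies Zariski descent, which was already invoked in the excerpt; this gives the hypercohomology-with-support description $HN_m(O_{X,y}[\varepsilon]\ \mathrm{on}\ y[\varepsilon],\varepsilon) = \mathbb{H}^{-m}_{\{y\}}(\operatorname{Spec} R, \mathcal{HN}(O,\varepsilon))$, and similarly for the eigenspaces. (2) Feed in the affine computation: as presheaves on $\operatorname{Spec} R$, $\mathcal{HN}(O,\varepsilon)$ has homotopy sheaves given by the sheafification of $U \mapsto HN_\ast(O(U)[\varepsilon],\varepsilon)$, which by (3.2)--(3.3) (applied to each regular local ring of $R$, or via HKR for smooth algebras) is the sheaf $(\Omega^{\ast-1}_{X/\mathbb{Q}} \oplus \Omega^{\ast-3}_{X/\mathbb{Q}} \oplus \cdots)^{\oplus n-1}$, a bounded complex of coherent (in fact locally free) sheaves placed in the appropriate degrees; the eigenspace version replaces this by the single shifted sheaf $(\Omega^{2i-\ast-1}_{X/\mathbb{Q}})^{\oplus n-1}$. (3) Run the local-cohomology (equivalently, the support) spectral sequence $E_2^{s,t} = H^s_{\{y\}}(X, \underline{HN}_{-t}) \Rightarrow \mathbb{H}^{s+t}_{\{y\}}$; since $O_{X,y}$ is regular local of dimension $p$, local cohomology $H^s_{\{y\}}$ of a locally free sheaf is concentrated in degree $s = p$, so the spectral sequence degenerates and yields $\mathbb{H}^{-m}_{\{y\}}(\mathcal{HN}(O,\varepsilon)) \cong H^p_{\{y\}}\big((\Omega^{m+p-1}_{X/\mathbb{Q}} \oplus \Omega^{m+p-3}_{X/\mathbb{Q}} \oplus \cdots)^{\oplus n-1}\big)$, with the index shift by $p$ coming precisely from this cohomological concentration in degree $p$. (4) Repeat verbatim for $\mathcal{HN}^{(i)}(O,\varepsilon)$, where the single-sheaf input $(\Omega^{2i-m-1}_{R/\mathbb{Q}})^{\oplus n-1}$ (valid for $\frac{m}{2} < i \le m$ over $R$) becomes, after the degree-$p$ shift, $(\Omega^{2i-(m+p)-1}_{X/\mathbb{Q}})^{\oplus n-1}$ for $\frac{m+p}{2} < i \le m+p$, and zero otherwise — matching the claimed formula.

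**Main obstacle.** The genuinely delicate point is bookkeeping the degree shift by $p$ and making sure it is consistent between the absolute case (Step 3) and the eigenspace case (Step 4); in particular one must check that the weight decomposition of $HN$ is compatible with the local-cohomology spectral sequence (it is, since Adams operations are natural and $\mathbb{H}_Y(X,-)$ preserves the splitting fibrations, as noted right before Theorem~\ref{theorem: EigenGoodwillieSupport}), and that the range ``$\frac{m+p}{2} < i \le m+p$'' is exactly the image of ``$\frac{m'}{2} < i \le m'$'' under $m' = m+p$. A secondary technical point is justifying that $\mathcal{HN}(O,\varepsilon)$ really has the claimed homotopy sheaves Zariski-locally — i.e., that the HKR-type identification (3.2)--(3.3), stated for a regular domain, sheafifies correctly; this follows because smoothness is a local property and $\Omega^\bullet_{X/\mathbb{Q}}$ is the sheaf whose stalks are the modules of Kähler differentials of the regular local rings, but it should be stated carefully. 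I do not expect the homological algebra itself to be hard once the indexing is pinned down.
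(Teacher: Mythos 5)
Your proposal follows essentially the same route as the paper: identify the relative negative cyclic group with support as hypercohomology with support of the relative presheaf $\mathcal{HN}(O,\varepsilon)$ over $\mathrm{Spec}(O_{X,y})$, feed in the affine computation $HN_{\ast}(R[\varepsilon],\varepsilon)\cong(\Omega^{\ast-1}_{R/\mathbb{Q}}\oplus\Omega^{\ast-3}_{R/\mathbb{Q}}\oplus\cdots)^{\oplus n-1}$, and run the local-cohomology spectral sequence, which collapses to the single column in degree $p$ and produces exactly the shift $m\mapsto m+p$; the eigenspace case is handled verbatim, as in the paper.

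One justification in your Step 3 needs repair: the sheaves $\Omega^{n}_{X/\mathbb{Q}}$ are \emph{not} coherent or locally free, since the differentials are taken over $\mathbb{Q}$ rather than over $k$ (the module $\Omega^{n}_{O_{X,y}/\mathbb{Q}}$ is not of finite type). So you cannot invoke concentration of $H^{s}_{y}$ in degree $p$ for locally free sheaves directly. The paper's fix is the one you should use: $\Omega^{n}_{O_{X,y}/\mathbb{Q}}$ is a filtered colimit of direct sums of copies of $O_{X,y}$, hence has depth $p$ over the regular local ring $O_{X,y}$ of dimension $p$, and since local cohomology commutes with such colimits, $H^{s}_{y}(\Omega^{n}_{O_{X,y}/\mathbb{Q}})$ still vanishes for $s\neq p$. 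With that substitution your degeneration argument, the resulting identification $\mathbb{H}^{-m}_{y}\cong H^{p}_{y}\bigl((\Omega^{m+p-1}_{X/\mathbb{Q}}\oplus\Omega^{m+p-3}_{X/\mathbb{Q}}\oplus\cdots)^{\oplus n-1}\bigr)$, and your bookkeeping of the eigenspace range $\frac{m+p}{2}<i\leq m+p$ all match the paper's proof.
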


\begin{proof}
$O_{X,y}$ is a regular local ring with dimension $p$, so the depth of $O_{X,y}$ is $p$. For each $n \in \mathbb{Z}$,  $\Omega^{n}_{O_{X,y}/\mathbb{Q}}$ can be written as a direct limit of 
direct sum of $O_{X,y}$'s(as $O_{X,y}$-module), though $\Omega^{n}_{O_{X,y}/\mathbb{Q}}$ is not of finite type. Therefore, $\Omega^{n}_{O_{X,y}/\mathbb{Q}}$ has depth $p$.

 $HN_{m}(O_{X,y}[\varepsilon] \ \mathrm{on} \ y[\varepsilon],\varepsilon)$ can be identified with the hypercohomology $\mathbb{H}_{y}^{-m}(O_{X,y},HN(O_{X,y}[\varepsilon],\varepsilon))$,
where $HN(O_{X,y}[\varepsilon],\varepsilon)$ is the relative negative cyclic complex, that is, the kernel of
\[
 HN(O_{X,y}[\varepsilon]) \xrightarrow{\varepsilon=0} HN(O_{X,y}).
\]

There is a spectral sequence :
\[
 H_{y}^{-m-q}(O_{X,y}, H^{q}(HN(O_{X,y}[\varepsilon],\varepsilon))) \Longrightarrow \mathbb{H}_{y}^{-m}(HN(O_{X,y}[\varepsilon],\varepsilon)).
\]

By Formula (3.8) above, we have
 \[
 H^{q}(HN(O_{X,y}[\varepsilon],\varepsilon))= HN_{-q}(O_{X,y}[\varepsilon],\varepsilon)= (\Omega^{-q-1}_{O_{X,y}/\mathbb{Q}}\oplus \Omega^{-q-3}_{O_{X,y}/\mathbb{Q}}\oplus \dots)^{\oplus n-1}.
 \]
 As each $\Omega^{n}_{O_{X,y}/\mathbb{Q}}$ has depth $p$, only $H_{y}^{p}(X,H^{q}(HN(O_{X,y}[\varepsilon],\varepsilon)))$ can survive because of the depth condition.
This means $q=-m-p$ and 
\[
 H^{-m-p}(HN(O_{X,y}[\varepsilon],\varepsilon))=HN_{m+p}(O_{X,y}[\varepsilon],\varepsilon)= (\Omega^{m+p-1}_{O_{X,y}/\mathbb{Q}}\oplus \Omega^{m+p-3}_{O_{X,y}/\mathbb{Q}}\oplus \dots)^{\oplus n-1}.
\]
Let's write 
\[
\Omega^{\bullet}_{X/\mathbb{Q}} = (\Omega^{m+p-1}_{X/\mathbb{Q}}\oplus \Omega^{m+p-3}_{X/\mathbb{Q}}\oplus \dots)^{\oplus n-1}, 
\]
thus 
\[
 \mathbb{H}_{y}^{-m}(HN(O_{X,y}[\varepsilon],\varepsilon))=H_{y}^{p}(\Omega^{\bullet}_{X/\mathbb{Q}}),
\]
this means
\[
 HN_{m}(O_{X,y}[\varepsilon] \ \mathrm{on} \ y[\varepsilon],\varepsilon)= H_{y}^{p}(\Omega^{\bullet}_{X/\mathbb{Q}}).
\]

The proof for $ HN_{m}^{(i)}$ works similarly.
\end{proof}

\begin{corollary} \label{corollary: OrderNRelVersion}
Under the same assumption as above, 
relative Chern character induces the following isomorphisms between relative K-groups and local cohomology groups:
\begin{equation}
\begin{cases}
 \begin{CD}
 K_{m}(O_{X,y}[\varepsilon] \ \mathrm{on} \ y[\varepsilon],\varepsilon)_{\mathbb{Q}} \cong H_{y}^{p}(\Omega^{\bullet}_{X/\mathbb{Q}}), \\
  K^{(i)}_{m}(O_{X,y}[\varepsilon] \ \mathrm{on} \ y[\varepsilon],\varepsilon)_{\mathbb{Q}} \cong H_{y}^{p}(\Omega^{\bullet,(i)}_{X/\mathbb{Q}}),
 \end{CD}
\end{cases}
\end{equation} 
where $\Omega^{\bullet}_{X/\mathbb{Q}}=(\Omega^{m+p-1}_{X/\mathbb{Q}}\oplus \Omega^{m+p-3}_{X/\mathbb{Q}}\oplus \dots)^{\oplus n-1}$; and
\begin{equation}
\begin{cases}
 \Omega_{X/ \mathbb{Q}}^{\bullet,(i)} & = (\Omega^{{2i-(m+p)-1}}_{X/ \mathbb{Q}})^{\oplus n-1}, \mathrm{for} \  \frac{m+p}{2}  < \ i \leq m+p;\\
  \Omega_{X/ \mathbb{Q}}^{\bullet,(i)} & = 0, \mathrm{else}.
\end{cases}
\end{equation} 

\end{corollary}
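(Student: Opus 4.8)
The plan is to read the Corollary as the concatenation of two facts already established: the ``with support'' Goodwillie isomorphisms of Corollary~\ref{corollary: GoodwillieWithSupport} and Theorem~\ref{theorem: EigenGoodwillieSupport}, which identify relative $K$-theory with relative negative cyclic homology rationally (respectively without and with the Adams eigenspace refinement), together with the explicit evaluation of the latter in Theorem~\ref{theorem: RelNegCycSupport}. Thus once the chain
\[
 K_{m}^{(i)}(O_{X,y}[\varepsilon] \ \mathrm{on} \ y[\varepsilon],\varepsilon)_{\mathbb{Q}}
 \;\cong\; HN_{m}^{(i)}(O_{X,y}[\varepsilon] \ \mathrm{on} \ y[\varepsilon],\varepsilon)_{\mathbb{Q}}
 \;\cong\; H_{y}^{p}(\Omega^{\bullet,(i)}_{X/\mathbb{Q}})
\]
is justified (and likewise the unsuperscripted version, using $HN_m$ and $H_{y}^{p}(\Omega^{\bullet}_{X/\mathbb{Q}})$), there is nothing more to prove. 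The first isomorphism is the only point needing an argument, because Theorem~\ref{theorem: EigenGoodwillieSupport} is stated for schemes of finite type over $k$, whereas $\mathrm{Spec}(O_{X,y})$ is not of finite type.

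To bridge this gap I would use a continuity (finitariness) argument. Write $O_{X,y} = \varinjlim_{U} O_X(U)$, the filtered colimit over affine open neighborhoods $U$ of $y$ in $X$; each such $U$ is of finite type over $k$, and $\mathrm{Spec}(O_{X,y}) = \varprojlim_U U$, with the closed point of $\mathrm{Spec}(O_{X,y})$ corresponding to the ``limit'' of the closed subsets $\overline{\{y\}}\cap U$. Since the presheaves $\mathcal{K}(O,\varepsilon)$, $\mathcal{HN}(O,\varepsilon)$ and their eigenspace summands $\mathcal{K}^{(i)}(O,\varepsilon)$, $\mathcal{HN}^{(i)}(O,\varepsilon)$ all commute with filtered colimits of rings, and since $\mathbb{H}_Z(-)$ carries a homotopy equivalence of presheaves of spectra to a homotopy equivalence, applying Corollary~\ref{corollary: GoodwillieWithSupport} and Theorem~\ref{theorem: EigenGoodwillieSupport} to each pair $(U,\overline{\{y\}}\cap U)$ and passing to the colimit over $U$ gives
\[
 K_{m}^{(i)}(O_{X,y}[\varepsilon] \ \mathrm{on} \ y[\varepsilon],\varepsilon)_{\mathbb{Q}}
 \;=\; \varinjlim_{U} K_{m}^{(i)}\bigl(U[\varepsilon] \ \mathrm{on} \ (\overline{\{y\}}\cap U)[\varepsilon],\varepsilon\bigr)_{\mathbb{Q}},
\]
the analogous formula for $HN_m^{(i)}$, and a termwise identification of the two by the relative Chern character; here I also use that rationalization and passage to eigenspaces are exact, hence commute with the colimit. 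Equivalently, one may invoke the presheaf equivalence $\mathrm{Ch}\colon \mathcal{K}(O,\varepsilon)\simeq\mathcal{HN}(O,\varepsilon)$ of Theorem~\ref{theorem: CHW}, together with its eigenspace refinement, directly on the small Zariski site of $\mathrm{Spec}(O_{X,y})$ and take hypercohomology with support at the closed point.

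With the first isomorphism in place, the second is exactly Theorem~\ref{theorem: RelNegCycSupport}: substitute
\[
 HN_{m}(O_{X,y}[\varepsilon] \ \mathrm{on} \ y[\varepsilon],\varepsilon) \cong H_{y}^{p}(\Omega^{\bullet}_{X/\mathbb{Q}}), \qquad
 HN_{m}^{(i)}(O_{X,y}[\varepsilon] \ \mathrm{on} \ y[\varepsilon],\varepsilon) \cong H_{y}^{p}(\Omega^{\bullet,(i)}_{X/\mathbb{Q}}),
\]
with $\Omega^{\bullet}_{X/\mathbb{Q}}=(\Omega^{m+p-1}_{X/\mathbb{Q}}\oplus\Omega^{m+p-3}_{X/\mathbb{Q}}\oplus\cdots)^{\oplus n-1}$ and $\Omega^{\bullet,(i)}_{X/\mathbb{Q}}$ equal to $(\Omega^{2i-(m+p)-1}_{X/\mathbb{Q}})^{\oplus n-1}$ for $\frac{m+p}{2}<i\le m+p$ and $0$ otherwise. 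I expect the main obstacle to be precisely the continuity step: one must check that ``relative $K$-theory with support of the local ring $O_{X,y}$'' is genuinely computed by that filtered colimit of hypercohomology-with-support spectra over shrinking neighborhoods --- i.e. that the support conditions behave correctly under $\mathrm{Spec}(O_{X,y})=\varprojlim_U U$ (the closure $\overline{\{y\}}$ collapsing to the closed point in the limit) and remain compatible with rationalization and with the Adams operations. Once this is granted, both isomorphisms are formal consequences of results already in the text.
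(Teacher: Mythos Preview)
Your proposal is correct and follows the same route as the paper, which gives no separate proof and regards the corollary as an immediate combination of Corollary~\ref{corollary: GoodwillieWithSupport}/Theorem~\ref{theorem: EigenGoodwillieSupport} with Theorem~\ref{theorem: RelNegCycSupport}. The finite-type worry you raise is not one the paper addresses: since Theorem~\ref{theorem: CHW} is a presheaf-level homotopy equivalence $\mathcal{K}^{(i)}(O,\varepsilon)\simeq\mathcal{HN}^{(i)}(O,\varepsilon)$, the paper's derivation of the support version goes through verbatim on the Zariski site of $\mathrm{Spec}(O_{X,y})$ --- exactly the alternative you mention --- so your continuity argument, while correct, is more than what is needed.
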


\subsection{Bloch's formula}
\label{Bloch's formula}
 In this subsection, $X$ is a smooth projective variety over a field $k$ of characteristic $0$ and 
 $X_{j}$ is the $j$-th  trivial infinitesimal thickening of $X$, that is, $X_{j}= X \times_{k} S_{j}$, 
where $S_{j}=\mathrm{Spec}(k[t]/(t^{j+1}))$. The aim of this subsection is to extend Bloch's formula from $X$ to its trivial infinitesimal thickening $X_j$.

Let $i_{j}: X \to X_{j+1}$ denote the closed immersion and $p_{j}: X_{j} \to X$ denote the natural projection. We have $p_{j} \circ  i_{j} = \mathrm{Identity}$. The morphism $i^{\ast}_{j}$ induces pull-back between K-theory spectra: $i^{\ast}_{j}: \mathcal{K}(X_{j}) \to \mathcal{K}(X)$. Furthermore it induces maps between coniveau spectral sequences, recalled in Section ~\ref{K-theoretic Chow groups of derived categoies}:
\[
i^{\ast}_{j}: E_{1}^{p,q}(X_{j}) \to E_{1}^{p,q}(X).
\]

Since the pull-back of $p_{j}$:
\[
p^{\ast}_{j}: E_{1}^{p,q}(X) \to E_{1}^{p,q}(X_{j}),
\]
splits 
\[
i^{\ast}_{j}: E_{1}^{p,q}(X_{j}) \to E_{1}^{p,q}(X),
\]
this gives us the following split commutative diagram:
{\footnotesize
\[
  \begin{CD}
     0 @. 0 @. 0\\
      @VVV @VVV @VVV\\
     \overline{K}_{q}(k(X)[t]/(t^{j+1})) @<<< K_{q}(k(X)[t]/(t^{j+1})) @>i^{\ast}_{j}>>  K_{q}(k(X))  \\
     @VVV @VVV @VVV\\
      \bigoplus\limits_{x \in X ^{(1)}} \overline{K}_{q-1}(O_{X_{j},x_{j}} \ \mathrm{on} \ x_{j}) @<<< \bigoplus\limits_{x_{j} \in X_{j} ^{(1)}}K_{q-1}(O_{X_{j},x_{j}} \ \mathrm{on} \ x_{j}) @>>>  \bigoplus\limits_{x \in X ^{(1)}}K_{q-1}(O_{X,x} \ \mathrm{on} \ x) \\
    @VVV  @VVV @VVV\\
     \dots @<<<  \dots @>>> \dots \\ 
      @VVV @VVV @VVV\\
     \bigoplus\limits_{x \in X ^{(q-1)}} \overline{K}_{1}(O_{X_{j},x_{j}} \ \mathrm{on} \ x_{j}) @<<<  \bigoplus\limits_{x_{j} \in X_{j} ^{(q-1)}}K_{1}(O_{X_{j},x_{j}} \ \mathrm{on} \ x_{j})
      @>>> \bigoplus\limits_{x \in X^{(q-1)}}K_{1}(O_{X,x} \ \mathrm{on} \ x) \\
     @V \overline{d}_{1,X_{j}}^{q-1,-q}VV @Vd_{1,X_{j}}^{q-1,-q}VV @Vd_{1,X}^{q-1,-q}VV\\
      \bigoplus\limits_{x \in X ^{(q)}} \overline{K}_{0}(O_{X_{j},x_{j}} \ \mathrm{on} \ x_{j}) @<<<  \bigoplus\limits_{x_{j} \in X_{j} ^{(q)}}K_{0}(O_{X_{j},x_{j}} \ \mathrm{on} \ x_{j})
      @>>> \bigoplus\limits_{x \in X ^{(q)}}K_{0}(O_{X,x} \ \mathrm{on} \ x) \\
     @V \overline{d}_{1,X_{j}}^{q,-q}VV  @Vd_{1,X_{j}}^{q,-q}VV @Vd_{1,X}^{q,-q}VV\\
     \bigoplus\limits_{x \in X ^{(q+1)}} \overline{K}_{-1}(O_{X_{j},x_{j}} \ \mathrm{on} \ x_{j}) @<<< \bigoplus\limits_{x_{j} \in X_{j} ^{(q+1)}}K_{-1}(O_{X_{j},x_{j}} \ \mathrm{on} \ x_{j})
      @>>> \bigoplus\limits_{x \in X ^{(q+1)}}K_{-1}(O_{X,x} \ \mathrm{on} \ x) \\
     @V\overline{d}_{1,X_{j}}^{q+1,-q}VV @Vd_{1,X_{j}}^{q+1,-q}VV @Vd_{1,X}^{q+1,-q}VV\\
     \dots @<<< \dots @>>> \dots \\ 
     @VVV @VVV @VVV\\
     \bigoplus\limits_{x \in X ^{(d)}} \overline{K}_{q-d}(O_{X_{j},x_{j}} \ \mathrm{on} \ x_{j}) @<<<  \bigoplus\limits_{x_{j} \in X_{j} ^{(d)}}K_{q-d}(O_{X_{j},x_{j}} \ \mathrm{on} \ x_{j}) @>>> \bigoplus\limits_{x \in X ^{(d)}}K_{q-d}(O_{X,x} \ \mathrm{on} \ x) \\
     @VVV @VVV @VVV\\
     0 @. 0 @. 0,
  \end{CD}
\]
}
where $\overline{K}_{\ast}(O_{X_{j},x_{j}} \ \mathrm{on} \ x_{j}) $ denotes the kernel of the natural projection
\[
K_{\ast}(O_{X_{j},x_{j}} \ \mathrm{on} \ x_{j}) \to  K_{\ast}(O_{X,x} \ \mathrm{on} \ x).
\]
And the differentials $\overline{d}_{1,X_{j}}^{p,q}$, which  is induced from $d_{1,X_{j}}^{p,q}$, satisfies 
$ d_{1,X_{j}}^{p,q} = d_{1,X}^{p,q} \oplus \overline{d}_{1,X_{j}}^{p,q}$.

Each term of the left column in the above diagram can be identified with local cohomology group, using Corollary ~\ref{corollary: OrderNRelVersion}.  In fact, the relative Chern character induces the following commutative diagram
\[
  \begin{CD}
     0 @. 0\\
      @VVV  @VVV\\
    \Omega_{k(X)/ \mathbb{Q}}^{\bullet} @<\cong<<  \overline{K}_{q}(k(X)[t]/(t^{j+1}))_{\mathbb{Q}}  \\
     @VVV @VVV\\
       \bigoplus \limits_{x \in X^{(1)}}H_{x}^{1}(\Omega_{X/\mathbb{Q}}^{\bullet})  @<<< \bigoplus\limits_{x \in X ^{(1)}} \overline{K}_{q-1}(O_{X_{j},x_{j}} \ \mathrm{on} \ x_{j})_{\mathbb{Q}}  \\
    @VVV   @VVV\\
     \dots @<<<  \dots \\ 
      @VVV  @VVV\\
     \bigoplus \limits_{x \in X^{(q-1)}}H_{x}^{q-1}(\Omega_{X/\mathbb{Q}}^{\bullet})  @<<< \bigoplus\limits_{x \in X ^{(q-1)}} \overline{K}_{1}(O_{X_{j},x_{j}} \ \mathrm{on} \ x_{j})_{\mathbb{Q}} \\
     @V\partial_{1}^{q-1,-q}VV @V \overline{d}_{1,X_{j}}^{q-1,-q}VV  \\
      \bigoplus \limits_{x \in X^{(q)}}H_{x}^{q}(\Omega_{X/\mathbb{Q}}^{\bullet}) @<<< \bigoplus\limits_{x \in X ^{(q)}} \overline{K}_{0}(O_{X_{j},x_{j}} \ \mathrm{on} \ x_{j})_{\mathbb{Q}} \\
     @V\partial_{1}^{q,-q}VV @V \overline{d}_{1,X_{j}}^{q,-q}VV  \\
      \bigoplus \limits_{x \in X^{(q+1)}}H_{x}^{q+1}(\Omega_{X/\mathbb{Q}}^{\bullet})  @<<< \bigoplus\limits_{x \in X ^{(q+1)}} \overline{K}_{-1}(O_{X_{j},x_{j}} \ \mathrm{on} \ x_{j})_{\mathbb{Q}} \\
     @V\partial_{1}^{q+1,-q}VV @V\overline{d}_{1,X_{j}}^{q+1,-q}VV  \\
     \dots @<<< \dots \\ 
     @VVV  @VVV\\
     \bigoplus \limits_{x \in X^{(d)}}H_{x}^{d}(\Omega_{X/\mathbb{Q}}^{\bullet})  @<<< \bigoplus\limits_{x \in X ^{(d)}} \overline{K}_{q-d}(O_{X_{j},x_{j}} \ \mathrm{on} \ x_{j})_{\mathbb{Q}} \\
     @VVV @VVV\\
     0 @. 0,
  \end{CD}
\]
where 
\begin{equation*}
\begin{cases}
 \Omega_{X/ \mathbb{Q}}^{\bullet} &=  \ (\Omega^{q-1}_{X/\mathbb{Q}}\oplus \Omega^{q-3}_{X/\mathbb{Q}}\oplus \dots)^{\oplus j},\\
 \Omega_{k(X)/ \mathbb{Q}}^{\bullet} &=  \ (\Omega^{q-1}_{k(X)/\mathbb{Q}}\oplus \Omega^{q-3}_{k(X)/\mathbb{Q}}\oplus \dots)^{\oplus j}. \
\end{cases}
\end{equation*} 
The left column is the classical Cousin complex \cite{H-1}.
Combining this diagram with the one of page 14, one has:
\begin{theorem} \label{theorem: theoremversion1}
Let $X$ be a smooth projective variety over a field $k$ of characteristic $0$ and 
 $X_{j}$ be the $j$-th  trivial infinitesimal thickening of $X$. For each integer $q \geq 1$, there exists the following commutative diagram in which the Zariski sheafification of each column is a flasque resolution of $ \Omega_{X/ \mathbb{Q}}^{\bullet}$,  $K_{q}(O_{X_{j}})_{\mathbb{Q}}$ and $K_{q}(O_{X})_{\mathbb{Q}}$ respectively. The left arrows are induced by Chern characters from K-theory to negative cyclic homology.
 {\footnotesize
\[
  \begin{CD}
     0 @. 0 @. 0\\
     @VVV  @VVV @VVV\\
     \Omega_{k(X)/ \mathbb{Q}}^{\bullet} @<\mathrm{Chern}<<  K_{q}(k(X)[t]/(t^{j+1}))_{\mathbb{Q}} @>i^{\ast}_{j}>> K_{q}(k(X))_{\mathbb{Q}}  \\
    @VVV  @VVV @VVV\\
      \bigoplus \limits_{x \in X^{(1)}}H_{x}^{1}(\Omega_{X/\mathbb{Q}}^{\bullet})  @<<< \bigoplus\limits_{x_{j} \in X_{j} ^{(1)}}K_{q-1}(O_{X_{j},x_{j}} \ \mathrm{on} \ x_{j})_{\mathbb{Q}} @>>>  \bigoplus\limits_{x \in X ^{(1)}}K_{q-1}(O_{X,x} \ \mathrm{on} \ x)_{\mathbb{Q}} \\
     @VVV @VVV @VVV\\
     \dots @<<< \dots @>>> \dots \\ 
      @VVV @VVV @VVV\\
     \bigoplus \limits_{x \in X^{(q-1)}}H_{x}^{q-1}(\Omega_{X/\mathbb{Q}}^{\bullet})  @<<< \bigoplus\limits_{x_{j} \in X_{j} ^{(q-1)}}K_{1}(O_{X_{j},x_{j}} \ \mathrm{on} \ x_{j})_{\mathbb{Q}}
      @>>> \bigoplus\limits_{x \in X^{(q-1)}}K_{1}(O_{X,x} \ \mathrm{on} \ x)_{\mathbb{Q}} \\
     @V\partial_{1}^{q-1,-q}VV @Vd_{1,X_{j}}^{q-1,-q}VV @Vd_{1,X}^{q-1,-q}VV\\
     \bigoplus \limits_{x \in X^{(q)}}H_{x}^{q}(\Omega_{X/\mathbb{Q}}^{\bullet})  @<<< \bigoplus\limits_{x_{j} \in X_{j} ^{(q)}}K_{0}(O_{X_{j},x_{j}} \ \mathrm{on} \ x_{j})_{\mathbb{Q}}
      @>>> \bigoplus\limits_{x \in X ^{(q)}}K_{0}(O_{X,x} \ \mathrm{on} \ x)_{\mathbb{Q}} \\
     @V\partial_{1}^{q,-q}VV @Vd_{1,X_{j}}^{q,-q}VV @Vd_{1,X}^{q,-q}VV\\
     \bigoplus \limits_{x \in X^{(q+1)}}H_{x}^{q+1}(\Omega_{X/\mathbb{Q}}^{\bullet})  @<<< \bigoplus\limits_{x_{j} \in X_{j} ^{(q+1)}}K_{-1}(O_{X_{j},x_{j}} \ \mathrm{on} \ x_{j})_{\mathbb{Q}}
      @>>> \bigoplus\limits_{x \in X ^{(q+1)}}K_{-1}(O_{X,x} \ \mathrm{on} \ x)_{\mathbb{Q}} \\
     @V\partial_{1}^{q+1,-q}VV @Vd_{1,X_{j}}^{q+1,-q}VV @Vd_{1,X}^{q+1,-q}VV\\
    \dots @<<< \dots @>>> \dots \\ 
     @VVV @VVV @VVV\\
    \bigoplus \limits_{x \in X^{(d)}}H_{x}^{d}(\Omega_{X/\mathbb{Q}}^{\bullet}) @<<<  \bigoplus\limits_{x_{j} \in X_{j} ^{(d)}}K_{q-d}(O_{X_{j},x_{j}} \ \mathrm{on} \ x_{j})_{\mathbb{Q}} @>>> \bigoplus\limits_{x \in X ^{(d)}}K_{q-d}(O_{X,x} \ \mathrm{on} \ x) _{\mathbb{Q}} \\
     @VVV @VVV @VVV\\
      0 @. 0 @. 0,
  \end{CD}
\]
}
where 
\begin{equation*}
\begin{cases}
 \Omega_{X/ \mathbb{Q}}^{\bullet} &=  \ (\Omega^{q-1}_{X/\mathbb{Q}}\oplus \Omega^{q-3}_{X/\mathbb{Q}}\oplus \dots)^{\oplus j},\\
 \Omega_{k(X)/ \mathbb{Q}}^{\bullet} &=  \ (\Omega^{q-1}_{k(X)/\mathbb{Q}}\oplus \Omega^{q-3}_{k(X)/\mathbb{Q}}\oplus \dots)^{\oplus j}. \
\end{cases}
\end{equation*}

\end{theorem}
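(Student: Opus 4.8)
The plan is to obtain the asserted array by splicing the two commutative diagrams already built in this subsection and then to verify the three ``flasque resolution'' statements one column at a time; commutativity of the full diagram is then automatic from the naturality of the pullbacks $i_{j}^{\ast}$, $p_{j}^{\ast}$, of the relative Chern characters, and of the coniveau differentials. First I would dispose of the right-hand column: augmented by $K_{q}(k(X))_{\mathbb{Q}}$ at the top it is precisely the rationalised coniveau (Gersten) complex of the smooth projective variety $X$, so by the Gersten conjecture for regular rings essentially of finite type over a field (Quillen's resolution theorem) its Zariski sheafification is a flasque resolution of $K_{q}(O_{X})_{\mathbb{Q}}$, each term $\bigoplus_{x\in X^{(r)}}(i_{x})_{\ast}K_{q-r}(k(x))_{\mathbb{Q}}$ being flasque as a direct sum of sheaves pushed forward from points.

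Next, the relation $d_{1,X_{j}}^{p,q}=d_{1,X}^{p,q}\oplus\overline{d}_{1,X_{j}}^{p,q}$ coming from the split diagram above exhibits the middle column as the direct sum of the right-hand column with the ``relative'' complex assembled from the kernel groups $\overline{K}_{\ast}(O_{X_{j},x_{j}}\ \mathrm{on}\ x_{j})_{\mathbb{Q}}$. Writing $\varepsilon=t$ with $\varepsilon^{j+1}=0$ one has $O_{X_{j},x_{j}}=O_{X,x}[\varepsilon]$ and $\overline{K}_{m}(O_{X_{j},x_{j}}\ \mathrm{on}\ x_{j})_{\mathbb{Q}}=K_{m}(O_{X,x}[\varepsilon]\ \mathrm{on}\ x[\varepsilon],\varepsilon)_{\mathbb{Q}}$, so Corollary~\ref{corollary: OrderNRelVersion} (applied with $n=j+1$) identifies this group with $H_{x}^{p}(\Omega^{\bullet}_{X/\mathbb{Q}})$, $p=\mathrm{codim}(x)$, where $\Omega^{\bullet}_{X/\mathbb{Q}}=(\Omega^{q-1}_{X/\mathbb{Q}}\oplus\Omega^{q-3}_{X/\mathbb{Q}}\oplus\cdots)^{\oplus j}$ is independent of $r$ because $m+p$ stays equal to $q$ all along the complex. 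Naturality of the relative Chern character of \cite{CHW} with respect to the localisation sequences defining the coniveau differentials then both makes the second diagram commute and identifies the relative complex with the Cousin complex of $\Omega^{\bullet}_{X/\mathbb{Q}}$, namely $\bigoplus_{x\in X^{(r)}}H_{x}^{r}(\Omega^{\bullet}_{X/\mathbb{Q}})$ in cohomological degree $r$.

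It then remains to see that this left column is a flasque resolution of $\Omega^{\bullet}_{X/\mathbb{Q}}$. Flasqueness of $\bigoplus_{x\in X^{(r)}}(i_{x})_{\ast}H_{x}^{r}(\Omega^{\bullet}_{X/\mathbb{Q}})$ is again automatic, and for exactness I would reuse the depth argument from the proof of Theorem~\ref{theorem: RelNegCycSupport}: although $\Omega^{n}_{X/\mathbb{Q}}$ is not coherent ($k/\mathbb{Q}$ not being finitely generated), each stalk $\Omega^{n}_{O_{X,y}/\mathbb{Q}}$ is a filtered colimit of finite free $O_{X,y}$-modules and hence of depth $\dim O_{X,y}$; since local cohomology commutes with filtered colimits, $H_{x}^{i}(\Omega^{n}_{X/\mathbb{Q}})=0$ for $i\neq\mathrm{codim}(x)$, whence Hartshorne's criterion \cite{H-1} that the Cousin complex of a Cohen--Macaulay sheaf is a resolution applies after passing to colimits. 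Finally, the stalkwise Goodwillie computation $K_{q}(O_{X,x}[\varepsilon]\ \mathrm{on}\ x[\varepsilon],\varepsilon)_{\mathbb{Q}}\cong HN_{q}(O_{X,x}[\varepsilon],\varepsilon)=(\Omega^{q-1}_{O_{X,x}/\mathbb{Q}}\oplus\cdots)^{\oplus j}$ (Theorem~\ref{theorem: CHW}) identifies the kernel sheaf $\overline{\mathcal{K}}_{q}(O_{X_{j}})_{\mathbb{Q}}$ with $\Omega^{\bullet}_{X/\mathbb{Q}}$, and since $p_{j}^{\ast}$ splits $i_{j}^{\ast}$ the sheafified middle column is a flasque resolution of $K_{q}(O_{X})_{\mathbb{Q}}\oplus\Omega^{\bullet}_{X/\mathbb{Q}}=K_{q}(O_{X_{j}})_{\mathbb{Q}}$, as required.

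I expect the real obstacle to be the exactness of the left (equivalently the middle) column: the classical assertion that the Cousin complex of a Cohen--Macaulay \emph{coherent} sheaf on a regular scheme is a resolution must be confirmed to survive the passage to the merely ind-coherent sheaf $\Omega^{\bullet}_{X/\mathbb{Q}}$, which is exactly what the depth-plus-filtered-colimit bookkeeping above is designed to guarantee. A secondary, purely technical point is to track the degree shift $m\mapsto m+p$ built into Corollary~\ref{corollary: OrderNRelVersion} consistently across the two diagrams and to check that the relative Chern character commutes with the boundary maps of the coniveau spectral sequence --- a compatibility that follows formally from the functoriality of the constructions of \cite{CHW} with respect to those localisation sequences.
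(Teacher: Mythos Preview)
Your proposal is correct and follows essentially the same route as the paper: establish that the right column is a flasque resolution by Gersten exactness for the smooth variety $X$, that the left (Cousin) column is a flasque resolution of $\Omega^{\bullet}_{X/\mathbb{Q}}$ via smoothness, and then deduce the middle column from the splitting $d_{1,X_{j}}^{p,q}=d_{1,X}^{p,q}\oplus\overline{d}_{1,X_{j}}^{p,q}$ together with the Chern-character identification of the relative part. The paper's proof is in fact terser than yours---it invokes smoothness for the two outer columns in one line and concludes---so your depth-plus-filtered-colimit argument for the ind-coherent sheaf $\Omega^{\bullet}_{X/\mathbb{Q}}$ and your remarks on naturality of the Chern character with respect to the coniveau boundaries are spelling out details the paper leaves implicit.
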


\begin{proof}
Since $X$ is smooth, the sheafifications of the left and right column are flasque resolutions of $ \Omega_{X/ \mathbb{Q}}^{\bullet}$ and $K_{q}(O_{X})_{\mathbb{Q}}$ respectively. So the sheafification of the middle column is a flasque resolution of $K_{q}(O_{X_{j}})_{\mathbb{Q}}$.
\end{proof}
 
Next, we want to use Adams operations $\psi^{k}$ to refine the diagram in Theorem ~\ref{theorem: theoremversion1}. Since $q$ can be any integer($\geq 1$) there, negative K-groups might appear. One can use Weibel's method to extend Adams operations to negative K-groups, as recalled in the beginning of Section ~\ref{Definition}. In the following, $K^{(i)}_{\ast}$ denotes the eigenspace of $\psi^{k}=k^{i}$.

Since $X$ is smooth, we note that negative K-groups of the right column Theorem ~ \ref{theorem: theoremversion1} are 0 and this Gersten complex agrees with the one in \cite{Q},
{\footnotesize
\begin{align*}
 0 \to \bigoplus_{x \in X^{(0)}}K_{q}(k(X)) \to \dots \to \bigoplus_{x \in X^{(q-1)}}K_{1}(k(x)) 
  \xrightarrow{d_{1,X}^{q-1,-q}}  \bigoplus_{x \in X^{(q)}}K_{0}(k(x)) \xrightarrow{d_{1,X}^{q,-q}} 0.
\end{align*}
}

Thus, we can use Adams operations , defined at space level \cite{S}, to decompose this complex directly. 
And for every $i \in \mathbb{Z}$, then there exists the following finer complex  
{\footnotesize
\begin{align*}
 0  \to K_{q}^{(i)}(k(X))_{\mathbb{Q}}  \to \dots \to \bigoplus_{x \in X^{(q-1)}}K_{1}^{(i)}(O_{X,x} \ \mathrm{on} \ x)_{\mathbb{Q}} 
   \xrightarrow{d_{1,X}^{q-1,-q}} \bigoplus_{x \in X^{(q)}}K_{0}^{(i)}(O_{X,x} \ \mathrm{on} \ x)_{\mathbb{Q}}  \xrightarrow{d_{1,X}^{q,-q}} 0, 
\end{align*}
}
which agrees with the following complex in  \cite{S},
{\footnotesize
\begin{align*}
0  \to K_{q}^{(i)}(k(X))_{\mathbb{Q}}  \to \dots \to \bigoplus_{x \in X^{(q-1)}}K_{1}^{(i-(q-1))}(k(x))_{\mathbb{Q}}  
   \xrightarrow{d_{1,X}^{q-1,-q}} \bigoplus_{x \in X^{(q)}}K_{0}^{(i-q)}(k(x))_{\mathbb{Q}}  \xrightarrow{d_{1,X}^{q,-q}} 0.
\end{align*}
}

 One notes the differentials $\partial_{1}^{p,q}$ of the Cousin complex(the left column in Theorem ~\ref{theorem: theoremversion1}) respects Adams operations(Adams operations on $H_{x}^{\ast}(\Omega_{X/\mathbb{Q}}^{\bullet})$ is induced from the isomorphism in Theorem ~\ref{theorem: RelNegCycSupport}):
\[
\partial_{1}^{p,q}:  \bigoplus \limits_{x \in X^{(p)}}H_{x}^{p}(\Omega_{X/\mathbb{Q}}^{\bullet, (i)}) \to \bigoplus \limits_{x \in X^{(p+1)}}H_{x}^{p+1}(\Omega_{X/\mathbb{Q}}^{\bullet,(i)}), 
\]
where 
\begin{equation}
\begin{cases}
 \Omega_{X/ \mathbb{Q}}^{\bullet,(i)} & =  \ (\Omega^{{2i-q-1}}_{X/ \mathbb{Q}})^{ \oplus j},   \mathrm{for} \  \frac{q}{2}  < \ i \leq q;\\
  \Omega_{X/ \mathbb{Q}}^{\bullet,(i)} &=  \  0, \mathrm{else}. \
\end{cases}
\end{equation}

Since the relative Chern character respects Adams operations, see Theorem ~\ref{theorem: CHW} and Corollary ~\ref{corollary: OrderNRelVersion}, there exists the following commutative diagram
\[
  \begin{CD}
     0 @. 0\\
      @VVV  @VVV\\
    \Omega_{k(X)/ \mathbb{Q}}^{\bullet,(i)} @<\cong<<  \overline{K}^{(i)}_{q}(k(X)[t]/(t^{j+1}))_{\mathbb{Q}}  \\
     @VVV @VVV\\
       \bigoplus \limits_{x \in X^{(1)}}H_{x}^{1}(\Omega_{X/\mathbb{Q}}^{\bullet,(i)})  @<<< \bigoplus\limits_{x \in X ^{(1)}} \overline{K}^{(i)}_{q-1}(O_{X_{j},x_{j}} \ \mathrm{on} \ x_{j})_{\mathbb{Q}}  \\
    @VVV   @VVV\\
     \dots @<<<  \dots \\ 
      @VVV  @VVV\\
     \bigoplus \limits_{x \in X^{(q-1)}}H_{x}^{q-1}(\Omega_{X/\mathbb{Q}}^{\bullet,(i)})  @<<< \bigoplus\limits_{x \in X ^{(q-1)}} \overline{K}^{(i)}_{1}(O_{X_{j},x_{j}} \ \mathrm{on} \ x_{j})_{\mathbb{Q}} \\
     @V\partial_{1}^{q-1,-q}VV @V \overline{d}_{1,X_{j}}^{q-1,-q}VV  \\
      \bigoplus \limits_{x \in X^{(q)}}H_{x}^{q}(\Omega_{X/\mathbb{Q}}^{\bullet,(i)}) @<<< \bigoplus\limits_{x \in X ^{(q)}} \overline{K}^{(i)}_{0}(O_{X_{j},x_{j}} \ \mathrm{on} \ x_{j})_{\mathbb{Q}} \\
     @V\partial_{1}^{q,-q}VV @V \overline{d}_{1,X_{j}}^{q,-q}VV  \\
      \bigoplus \limits_{x \in X^{(q+1)}}H_{x}^{q+1}(\Omega_{X/\mathbb{Q}}^{\bullet,(i)})  @<<< \bigoplus\limits_{x \in X ^{(q+1)}} \overline{K}^{(i)}_{-1}(O_{X_{j},x_{j}} \ \mathrm{on} \ x_{j})_{\mathbb{Q}} \\
     @V\partial_{1}^{q+1,-q}VV @V\overline{d}_{1,X_{j}}^{q+1,-q}VV  \\
     \dots @<<< \dots \\ 
     @VVV  @VVV\\
     \bigoplus \limits_{x \in X^{(d)}}H_{x}^{d}(\Omega_{X/\mathbb{Q}}^{\bullet,(i)})  @<<< \bigoplus\limits_{x \in X ^{(d)}} \overline{K}^{(i)}_{q-d}(O_{X_{j},x_{j}} \ \mathrm{on} \ x_{j})_{\mathbb{Q}} \\
     @VVV @VVV\\
     0 @. 0,
  \end{CD}
\]
where $\overline{K}^{(i)}_{\ast}(O_{X_{j},x_{j}} \ \mathrm{on} \ x_{j})$ denotes the eigenspace of $\psi^{k}=k^{i}$.
Consequently, $\overline{d}_{1,X_{j}}^{p,q}$ respects Adams operations.
The differential $d_{1,X_{j}}^{p,q}$ splits as $d_{1,X}^{p,q} \oplus \overline{d}_{1,X_{j}}^{p,q}$, so $d_{1,X_{j}}^{p,q}$ respects Adams operations:
\[
d_{1,X_{j}}^{p,q}: E_{1}^{p,q}(X_{j})^{(i)} \to E_{1}^{p,q}(X_{j})^{(i)}.
\]
So we have the following complex:
{\footnotesize
\begin{align*}
 0 \to & K_{q}^{(i)}(X_{j})_{\mathbb{Q}}   \to K_{q}^{(i)}(k(X_{j}))_{\mathbb{Q}}  \to \dots \to \bigoplus_{x_{j} \in X_{j}^{(q-1)}}K_{1}^{(i)}(O_{X_{j},x_{j}} \ \mathrm{on} \ x_{j})_{\mathbb{Q}}  
   \xrightarrow{d_{1,X_{j}}^{q-1,-q}} \bigoplus_{x_{j} \in X_{j}^{(q)}}K_{0}^{(i)}(O_{X_{j},x_{j}} \ \mathrm{on} \ x_{j})_{\mathbb{Q}}   \\  & \xrightarrow{d_{1,X_{j}}^{q,-q}} \bigoplus_{x_{j} \in X_{j}^{(q-1)}}K_{-1}^{(i)}(O_{X_{j},x_{j}} \ \mathrm{on} \ x_{j})_{\mathbb{Q}}   \to \dots \to \bigoplus_{x_{j} \in X_{j}^{(d)}}K_{q-d}^{(i)}(O_{X_{j},x_{j}} \ \mathrm{on} \ x_{j})_{\mathbb{Q}}  \to 0.
\end{align*}
}

\newpage

\begin{theorem} \label{theorem: Adamsdiagram}
Let $X$ be a smooth projective variety over a field $k$ of characteristic $0$ and 
 $X_{j}$ be the $j$-th  trivial infinitesimal thickening of $X$. For each integer $q \geq 1$, there exists the following commutative diagram in which the Zariski sheafification of each column is a flasque resolution of $\Omega_{X/ \mathbb{Q}}^{\bullet,(i)}$,  $K^{(i)}_{q}(O_{X_{j}})_{\mathbb{Q}} $ and $K^{(i)}_{q}(O_{X})_{\mathbb{Q}}$ respectively. The left arrows are induced by Chern characters from K-theory to negative cyclic homology.
 
{\footnotesize
\[
  \begin{CD}
     0 @. 0 @. 0\\
     @VVV @VVV @VVV\\
     \Omega_{k(X)/ \mathbb{Q}}^{\bullet,(i)} @<\mathrm{Chern}<<  K^{(i)}_{q}(k(X)[t]/(t^{j+1}))_{\mathbb{Q}}  @>i^{\ast}_{j}>> K^{(i)}_{q}(k(X))_{\mathbb{Q}}  \\
     @VVV @VVV @VVV\\
     \bigoplus \limits_{x \in X^{(1)}}H_{x}^{1}(\Omega_{X/\mathbb{Q}}^{\bullet,(i)}) @<<< \bigoplus \limits_{x_{j} \in X_{j}^{(1)}}K^{(i)}_{q-1}(O_{X_{j},x_{j}} \ \mathrm{on} \ x_{j})_{\mathbb{Q}}  @>>>  \bigoplus \limits_{x \in X^{(1)}}K^{(i)}_{q-1}(O_{X,x} \ \mathrm{on} \ x)_{\mathbb{Q}}  \\
     @VVV @VVV @VVV\\
      \dots @<<< \dots @>>> \dots \\ 
     @VVV @VVV @VVV\\
     \bigoplus\limits_{x \in X^{(q-1)}}H_{x}^{q-1}(\Omega_{X/ \mathbb{Q}}^{\bullet,(i)}) @<<< \bigoplus \limits_{x_{j} \in X_{j}^{(q-1)}}K^{(i)}_{1}(O_{X_{j},x_{j}} \ \mathrm{on} \ x_{j})_{\mathbb{Q}} 
    @>>> \bigoplus \limits_{x \in X^{(q-1)}}K^{(i)}_{1}(O_{X,x} \ \mathrm{on} \ x)_{\mathbb{Q}}  \\
     @VVV @Vd_{1, X_{j}}^{q-1,-q}VV @Vd_{1,X}^{q-1,-q}VV\\
     \bigoplus \limits_{x \in X^{(q)}}H_{x}^{q}(\Omega_{X/ \mathbb{Q}}^{\bullet,(i)}) @<<< \bigoplus \limits_{x_{j} \in X_{j}^{(q)}}K^{(i)}_{0}(O_{X_{j},x_{j}} \ \mathrm{on} \ x_{j})_{\mathbb{Q}} 
     @>>> \bigoplus\limits_{x \in X^{(q)}}K^{(i)}_{0}(O_{X,x} \ \mathrm{on} \ x)_{\mathbb{Q}}  \\
     @VVV @Vd_{1,X_{j}}^{q,-q}VV @Vd_{1,X}^{q,-q}VV\\
     \bigoplus\limits_{x \in X^{(q+1)}}H_{x}^{q+1}(\Omega_{X/ \mathbb{Q}}^{\bullet,(i)}) @<<< \bigoplus \limits_{x_{j} \in X_{j}^{(q+1)}}K^{(i)}_{-1}(O_{X_{j},x_{j}} \ \mathrm{on} \ x_{j})_{\mathbb{Q}} 
      @>>> \bigoplus\limits_{x \in X^{(q+1)}}K^{(i)}_{-1}(O_{X,x} \ \mathrm{on} \ x)_{\mathbb{Q}}  \\
     @VVV @VVV @VVV\\
     \dots @<<< \dots @>>> \dots \\ 
     @VVV @VVV @VVV\\
     \bigoplus \limits_{x\in X^{(d)}}H_{x}^{d}(\Omega_{X/ \mathbb{Q}}^{\bullet,(i)}) @<<< \bigoplus \limits_{x_{j} \in X_{j}^{(d)}}K^{(i)}_{q-d}(O_{X_{j},x_{j}} \ \mathrm{on} \ x_{j})_{\mathbb{Q}} 
    @>>>  \bigoplus\limits_{x \in X^{(d)}}K^{(i)}_{q-d}(O_{X,x} \ \mathrm{on} \ x)_{\mathbb{Q}}  \\
     @VVV @VVV @VVV\\
      0 @. 0 @. 0,
  \end{CD}
\]
}
where 
\begin{equation}
\begin{cases}
 \Omega_{X/ \mathbb{Q}}^{\bullet,(i)} & =  \ (\Omega^{{2i-q-1}}_{X/ \mathbb{Q}})^{ \oplus j},   \mathrm{for} \  \frac{q}{2}  < \ i \leq q;\\
  \Omega_{X/ \mathbb{Q}}^{\bullet,(i)} &=  \  0, \mathrm{else}. \
\end{cases}
\end{equation} 

\end{theorem}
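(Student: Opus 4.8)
The plan is to deduce the refined diagram of Theorem~\ref{theorem: Adamsdiagram} from the diagram of Theorem~\ref{theorem: theoremversion1} by splitting everything into eigenspaces of the Adams operations $\psi^{k}$ and verifying that each eigenspace piece is still a flasque resolution. I would treat the three columns in turn and then read off commutativity from the unrefined diagram together with the $\psi^{k}$-equivariance of all the maps involved.

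For the right column, I would observe that since $X$ is smooth its negative K-groups vanish, so the right column of Theorem~\ref{theorem: theoremversion1} is exactly Quillen's Gersten resolution of $K_{q}(O_{X})_{\mathbb{Q}}$. Soul\'e's space-level Adams operations act on each term, the differentials $d_{1,X}^{p,-q}$ commute with $\psi^{k}$ (as recalled just before the statement), and passing to the weight-$i$ eigenspace produces, by Soul\'e's decomposition together with Theorem~\ref{theorem: Soule} and Lemma~\ref{theorem: SouleRRWithoutdenominator}, the Gersten resolution of $K^{(i)}_{q}(O_{X})_{\mathbb{Q}}$, whose Zariski sheafification is flasque. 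For the left column, I would transport $\psi^{k}$ to the local cohomology groups $H^{p}_{x}(\Omega^{\bullet}_{X/\mathbb{Q}})$ via the isomorphisms of Theorem~\ref{theorem: RelNegCycSupport}; the Cousin differentials $\partial_{1}^{p,-q}$ respect these, so the weight-$i$ eigenspace is the Cousin complex of $\Omega^{\bullet,(i)}_{X/\mathbb{Q}}$. To see this is again a flasque resolution, I would repeat the depth argument from the proof of Theorem~\ref{theorem: RelNegCycSupport}: although $\Omega^{n}_{X/\mathbb{Q}}$ is not coherent, each stalk $\Omega^{n}_{O_{X,x}/\mathbb{Q}}$ is a filtered colimit of finite free $O_{X,x}$-modules and hence has depth $p$ for $x\in X^{(p)}$, so $H^{m}_{x}$ is concentrated in degree $p$, the Cousin complex is exact, and its terms are skyscraper-type direct sums $\bigoplus_{x}i_{x\ast}H^{p}_{x}(\Omega^{\bullet,(i)}_{X/\mathbb{Q}})$, hence flasque.

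For the middle column, I would use that $p_{j}^{\ast}$ splits $i_{j}^{\ast}$, giving $d_{1,X_{j}}^{p,-q}=d_{1,X}^{p,-q}\oplus\overline{d}_{1,X_{j}}^{p,-q}$; the commutative diagram displayed just before the theorem, built from the relative Chern character and Corollary~\ref{corollary: OrderNRelVersion}, identifies the ``barred'' subcomplex $\bigoplus_{x}\overline{K}^{(i)}_{\ast}(O_{X_{j},x_{j}}\ \mathrm{on}\ x_{j})_{\mathbb{Q}}$ with the Cousin complex of $\Omega^{\bullet,(i)}_{X/\mathbb{Q}}$, compatibly with $\psi^{k}$. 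Thus the middle complex is termwise, and compatibly with differentials, the direct sum of the left and right columns, so its sheafification is a flasque resolution of $\Omega^{\bullet,(i)}_{X/\mathbb{Q}}\oplus K^{(i)}_{q}(O_{X})_{\mathbb{Q}}$; by the splitting $K^{(i)}_{q}(O_{X_{j}})_{\mathbb{Q}}=\overline{K}^{(i)}_{q}(O_{X_{j}})_{\mathbb{Q}}\oplus K^{(i)}_{q}(O_{X})_{\mathbb{Q}}$ and Corollary~\ref{corollary: OrderNRelVersion}, this direct sum is $K^{(i)}_{q}(O_{X_{j}})_{\mathbb{Q}}$. Commutativity of each square then follows from the corresponding square in Theorem~\ref{theorem: theoremversion1} intersected with the relevant eigenspaces.

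The main obstacle I expect is twofold and both parts are essentially bookkeeping made delicate by the setting: first, making the flasque-resolution claim for the left column fully rigorous in the non-coherent de Rham situation, i.e.\ justifying exactness and flasqueness of the Cousin complex of the infinite-rank sheaves $\Omega^{\bullet,(i)}_{X/\mathbb{Q}}$ uniformly in $i$; and second, checking that the three a priori distinct Adams-operation structures in play --- Soul\'e's operations on K-theory with supports, Weibel's descending-induction extension to negative K-groups, and the weight grading on $HN$ and on absolute differentials --- are matched by the relative Chern character of Theorem~\ref{theorem: CHW} and Corollary~\ref{corollary: OrderNRelVersion}, so that the eigenspace decomposition of the middle column genuinely is the direct sum of the eigenspace decompositions of the other two columns.
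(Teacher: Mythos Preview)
Your proposal is correct and follows essentially the same approach as the paper: establish that the left and right columns are flasque resolutions of $\Omega_{X/\mathbb{Q}}^{\bullet,(i)}$ and $K^{(i)}_{q}(O_{X})_{\mathbb{Q}}$ respectively, and then deduce the same for the middle column via the splitting $d_{1,X_{j}}^{p,-q}=d_{1,X}^{p,-q}\oplus\overline{d}_{1,X_{j}}^{p,-q}$. Your write-up is considerably more detailed than the paper's two-sentence proof, but the strategy and the ingredients (Soul\'e's decomposition for the right column, the Cousin/depth argument for the left, and the Chern-character identification of the relative part) are exactly those assembled in the discussion preceding the theorem.
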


\begin{proof}
The sheafifications of the left and right columns are flasque resolutions of $\Omega_{X/ \mathbb{Q}}^{\bullet,(i)}$,  and $K^{(i)}_{q}(O_{X})_{\mathbb{Q}}$ respectively. 
So the sheafification of the middle column is a flasque resolution of $K^{(i)}_{q}(O_{X_{j}})_{\mathbb{Q}}$.
 
\end{proof}

\newpage

In particular, we are interested in  the ``Milnor K-theory''. Letting $i=q$ in Theorem ~\ref{theorem: Adamsdiagram}, one have the following theorem.

\begin{theorem} \label{theorem: Milnorversion}
Let $X$ be a smooth projective variety over a field $k$ of characteristic $0$ and 
 $X_{j}$ be the $j$-th  trivial infinitesimal thickening of $X$. For each integer $q \geq 1$, there exists the following commutative diagram in which the Zariski sheafification of each column is a flasque resolution of $(\Omega_{X/ \mathbb{Q}}^{q-1})^{\oplus j}$,  $K^{M}_{q}(O_{X_{j}})_{\mathbb{Q}}$ and $K^{M}_{q}(O_{X})_{\mathbb{Q}}$ respectively. The left arrows are induced by Chern characters from K-theory to negative cyclic homology.

{\footnotesize
\[
  \begin{CD}
     0 @. 0 @. 0\\
     @VVV @VVV @VVV\\
     (\Omega_{k(X)/ \mathbb{Q}}^{q-1})^{\oplus j} @<\mathrm{Chern}<<  K^{M}_{q}(k(X)[t]/(t^{j+1})) @>i^{\ast}_{j}>> K^{M}_{q}(k(X)) \\
     @VVV @VVV @VVV\\
     \bigoplus \limits_{x \in X^{(1)}}H_{x}^{1}((\Omega_{X/ \mathbb{Q}}^{q-1})^{\oplus j}) @<<< \bigoplus \limits_{x_{j} \in X_{j}^{(1)}}K^{M}_{q-1}(O_{X_{j},x_{j}} \ \mathrm{on} \ x_{j}) @>>>  \bigoplus \limits_{x \in X^{(1)}}K^{M}_{q-1}(O_{X,x} \ \mathrm{on} \ x)\\
     @VVV @VVV @VVV\\
      \dots @<<< \dots @>>> \dots \\ 
     @VVV @VVV @VVV\\
     \bigoplus\limits_{x \in X^{(q-1)}}H_{x}^{q-1}((\Omega_{X/ \mathbb{Q}}^{q-1})^{\oplus j}) @<<< \bigoplus \limits_{x_{j} \in X_{j}^{(q-1)}}K^{M}_{1}(O_{X_{j},x_{j}} \ \mathrm{on} \ x_{j})
    @>>> \bigoplus \limits_{x \in X^{(q-1)}}K^{M}_{1}(O_{X,x} \ \mathrm{on} \ x) \\
     @VVV @Vd_{1, X_{j}}^{q-1,-q}VV @Vd_{1,X}^{q-1,-q}VV\\
     \bigoplus \limits_{x \in X^{(q)}}H_{x}^{q}((\Omega_{X/ \mathbb{Q}}^{q-1})^{\oplus j}) @<<< \bigoplus \limits_{x_{j} \in X_{j}^{(q)}}K^{M}_{0}(O_{X_{j},x_{j}} \ \mathrm{on} \ x_{j})
     @>>> \bigoplus\limits_{x \in X^{(q)}}K^{M}_{0}(O_{X,x} \ \mathrm{on} \ x) \\
     @VVV @Vd_{1,X_{j}}^{q,-q}VV @Vd_{1,X}^{q,-q}VV\\
     \bigoplus\limits_{x \in X^{(q+1)}}H_{x}^{q+1}((\Omega_{X/ \mathbb{Q}}^{q-1})^{\oplus j}) @<<< \bigoplus \limits_{x_{j} \in X_{j}^{(q+1)}}K^{M}_{-1}(O_{X_{j},x_{j}} \ \mathrm{on} \ x_{j})
      @>>> \bigoplus\limits_{x \in X^{(q+1)}}K^{M}_{-1}(O_{X,x} \ \mathrm{on} \ x) \\
     @VVV @VVV @VVV\\
     \dots @<<< \dots @>>> \dots \\ 
     @VVV @VVV @VVV\\
     \bigoplus \limits_{x\in X^{(d)}}H_{x}^{d}((\Omega_{X/ \mathbb{Q}}^{q-1})^{\oplus j}) @<<< \bigoplus \limits_{x_{j} \in X_{j}^{(d)}}K^{M}_{q-d}(O_{X_{j},x_{j}} \ \mathrm{on} \ x_{j})
    @>>>  \bigoplus\limits_{x \in X^{(d)}}K^{M}_{q-d}(O_{X,x} \ \mathrm{on} \ x) \\
     @VVV @VVV @VVV\\
      0 @. 0 @. 0.
  \end{CD}
\]
}
\end{theorem}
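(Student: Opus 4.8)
The plan is to obtain Theorem~\ref{theorem: Milnorversion} as the special case $i=q$ of Theorem~\ref{theorem: Adamsdiagram}, after unwinding the definition of the Milnor K-groups with support in terms of Adams eigenspaces. The first step is purely arithmetical: setting $i=q$, the constraint $\tfrac{q}{2}<i\le q$ holds, so the coefficient sheaf in the left column of Theorem~\ref{theorem: Adamsdiagram} becomes $\Omega^{\bullet,(q)}_{X/\mathbb{Q}}=(\Omega^{2q-q-1}_{X/\mathbb{Q}})^{\oplus j}=(\Omega^{q-1}_{X/\mathbb{Q}})^{\oplus j}$, and over the function field $(\Omega^{q-1}_{k(X)/\mathbb{Q}})^{\oplus j}$. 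This reproduces verbatim the left column of the asserted diagram, together with the statement (already contained in Theorem~\ref{theorem: Adamsdiagram}) that its Zariski sheafification is a flasque resolution of $(\Omega^{q-1}_{X/\mathbb{Q}})^{\oplus j}$: it is the weight-$q$ summand of the Cousin complex of $\Omega^{\bullet}_{X/\mathbb{Q}}$, which remains flasque because a direct summand of a flasque sheaf is flasque.

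The second step matches the entries of the middle and right columns. Since $X_j=X\times_k S_j$ with $S_j$ zero-dimensional, $X_j$ and $X$ share the same underlying space and Krull dimension, and $x\mapsto x_j$ identifies $X^{(p)}$ with $X_j^{(p)}$. For $x\in X^{(p)}$ the entry of the weight-$q$ complex at codimension $p$ is $K^{(q)}_{q-p}(O_{X,x}\ \mathrm{on}\ x)_{\mathbb{Q}}$; taking $m=q-p$ in the defining formula $K^M_m(O_{X,x}\ \mathrm{on}\ x):=K^{(m+p)}_m(O_{X,x}\ \mathrm{on}\ x)_{\mathbb{Q}}$ (with $x\in X^{(p)}$; I have renamed the codimension index $p$ to avoid clash with the thickening index $j$) gives exactly $K^M_{q-p}(O_{X,x}\ \mathrm{on}\ x)$, and the identical computation over $O_{X_j,x_j}$ gives $K^M_{q-p}(O_{X_j,x_j}\ \mathrm{on}\ x_j)$. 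At the generic point, Soul\'e's Theorem~\ref{theorem: Soule} identifies $K^{(q)}_q(k(X))_{\mathbb{Q}}$ with $K^M_q(k(X))$ (modulo torsion, which disappears after $\otimes\mathbb{Q}$), and the right column of Theorem~\ref{theorem: Adamsdiagram} accordingly sheafifies to $K^M_q(O_X)_{\mathbb{Q}}$. The middle column then sheafifies to $K^M_q(O_{X_j})_{\mathbb{Q}}$ by the splitting argument already used in the proof of Theorem~\ref{theorem: Adamsdiagram}: via $d_{1,X_j}^{p,q}=d_{1,X}^{p,q}\oplus\overline d_{1,X_j}^{p,q}$ it decomposes as the direct sum of the right column and the relative column, the latter identified with the Cousin complex of $(\Omega^{q-1}_{X/\mathbb{Q}})^{\oplus j}$ through Corollary~\ref{corollary: OrderNRelVersion}.

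With both sides of the comparison in hand, the diagram of Theorem~\ref{theorem: Milnorversion} is literally the diagram of Theorem~\ref{theorem: Adamsdiagram} for $i=q$ rewritten in the $K^M$ notation, so its commutativity and the flasque-resolution statement for each column are inherited directly. The only point needing a genuine argument, rather than relabelling, is the last identification invoked above: that the Zariski sheaf $K^{(q)}_q(O_{X_j})_{\mathbb{Q}}$ — the weight-$q$ eigenspace sheaf of Thomason--Trobaugh K-theory of $O_{X_j}$ — deserves to be called the Milnor K-theory sheaf $K^M_q(O_{X_j})_{\mathbb{Q}}$ of Question~\ref{question: Green-Griffiths}. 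For $X$ itself this is Soul\'e's variant of Bloch--Quillen combined with Gersten resolution; for the thickening $X_j$ one reads it off the splitting into the $X$-part and the relative differential-forms part, using Lemma~\ref{theorem: SouleRRWithoutdenominator} to transport the residue-field identifications to K-groups with support along the non-regular local rings $O_{X_j,x_j}$. I expect this bookkeeping — keeping weight indices consistent over the non-regular rings and checking that passing to the $i=q$ summand preserves flasqueness of the resolutions — to be the main (if modest) obstacle; everything else is a direct specialization of Theorem~\ref{theorem: Adamsdiagram}.
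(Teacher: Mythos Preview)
Your proposal is correct and follows exactly the paper's approach: the paper proves Theorem~\ref{theorem: Milnorversion} simply by setting $i=q$ in Theorem~\ref{theorem: Adamsdiagram} (it says so in one line and gives no further argument). Your elaboration of the arithmetic $\Omega^{\bullet,(q)}_{X/\mathbb{Q}}=(\Omega^{q-1}_{X/\mathbb{Q}})^{\oplus j}$ and the relabelling $K^{(q)}_{q-p}(O_{X,x}\ \mathrm{on}\ x)_{\mathbb{Q}}=K^M_{q-p}(O_{X,x}\ \mathrm{on}\ x)$ is more explicit than the paper, but the content is identical.
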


The middle and right columns are complexes, so Definition ~\ref{definition: MilnorKChow} applies:
\begin{definition} \label{corollary: MilnorVerWellDef}
The Milnor K-theoretic $q$-cycles and rational equivalence of $(X,O_{X})$ are defined to be 
\[
 Z^{M}_{q}(D^{\mathrm{perf}}(X)) = \mathrm{Ker}(d_{1,X}^{q,-q}), \  Z_{q,\mathrm{rat}}^{M}(D^{\mathrm{perf}}(X)) =\mathrm{Im}(d_{1,X}^{q-1,-q}).
\]

The $q$-th Milnor K-theoretic Chow group  of $(X,O_{X})$ is defined to be
\[
 CH^{M}_{q}(D^{\mathrm{perf}}(X))= \dfrac{Z^{M}_{q}(D^{\mathrm{perf}}(X))}{Z_{q,\mathrm{rat}}^{M}(D^{\mathrm{perf}}(X))}.
\]

The Milnor K-theoretic $q$-cycles and rational equivalence of $(X,O_{X_{j}})$ are defined to be
\[
 Z^{M}_{q}(D^{\mathrm{perf}}(X_{j}))= \mathrm{Ker}(d_{1,X_{j}}^{q,-q}), \ Z_{q,\mathrm{rat}}^{M}(D^{\mathrm{perf}}(X_{j}))=\mathrm{Im}(d_{1,X_{j}}^{q-1,-q}).
\]

The $q$-th Milnor K-theoretic Chow group  of $(X,O_{X_{j}})$ is defined to be
\[
 CH^{M}_{q}(D^{\mathrm{perf}}(X_{j}))= \dfrac{Z^{M}_{q}(D^{\mathrm{perf}}(X_{j}))}{Z_{q,\mathrm{rat}}^{M}(D^{\mathrm{perf}}(X_{j}))}.
\]

\end{definition}

\textbf{Agreement}. We now prove that our  Milnor K-theoretic Chow group rationally agrees with the classical ones for smooth projective varieties.
\begin{theorem} 
For $X$ a smooth projective variety over a field $k$ of characteristic $0$, let $Z^{q}(X)$, $Z_{\mathrm{rat}}^{q}(X)$ and $CH^{q}(X)$ denote the classical $q$-cycles, rational equivalence and Chow groups respectively, then we have the following identifications
\[
 Z_{q}^{M}(D^{\mathrm{perf}}(X))= Z^{q}(X)_{\mathbb{Q}},
\]
\[
  Z_{q,\mathrm{rat}}^{M}(D^{\mathrm{perf}}(X)) = Z_{\mathrm{rat}}^{q}(X)_{\mathbb{Q}},
\]
\[
 CH^{M}_{q}(D^{\mathrm{perf}}(X)) = CH^{q}(X)_{\mathbb{Q}}.
\]
\end{theorem}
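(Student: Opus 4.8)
The plan is to compare the complexes computing $Z_q^M(D^{\mathrm{perf}}(X))$, etc., with the classical Gersten resolution of the Milnor K-sheaf, and then invoke the known Bloch–Quillen (Soulé variant) identification. For this I would take $i=q$ in the right-hand column of Theorem~\ref{theorem: Adamsdiagram}. Since $X$ is smooth, the negative K-groups $K^{(q)}_{-r}(O_{X,x}\ \mathrm{on}\ x)_{\mathbb Q}$ vanish for $r\ge 1$ (as noted just before Theorem~\ref{theorem: Adamsdiagram}, the right column Gersten complex agrees with Quillen's), so the complex computing $Z_q^M(D^{\mathrm{perf}}(X))$ degenerates to
\[
0\to K^{(q)}_{q}(k(X))_{\mathbb Q}\to\cdots\to\bigoplus_{x\in X^{(q-1)}}K^{(q)}_{1}(O_{X,x}\ \mathrm{on}\ x)_{\mathbb Q}\xrightarrow{d_{1,X}^{q-1,-q}}\bigoplus_{x\in X^{(q)}}K^{(q)}_{0}(O_{X,x}\ \mathrm{on}\ x)_{\mathbb Q}\to 0.
\]
By Lemma~\ref{theorem: SouleRRWithoutdenominator}, for $x\in X^{(p)}$ the pushforward $f_\ast$ identifies $K^{(q)}_{m}(O_{X,x}\ \mathrm{on}\ x)_{\mathbb Q}$ with $K^{(q-p)}_{m}(k(x))_{\mathbb Q}$, so along the $p$-th column ($m=q-p$) this reads $K^{(q-p)}_{q-p}(k(x))_{\mathbb Q}$, which by Theorem~\ref{theorem: Soule} is exactly $K^M_{q-p}(k(x))_{\mathbb Q}$. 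Thus term by term the complex is the classical Gersten complex $G^M_\bullet$ for the Milnor K-theory sheaf $K^M_q(O_X)$; one must check the differentials agree, which holds because $d_{1,X}^{p,-q}$ is the Soulé-graded piece of the coniveau differential and Quillen's identification of that differential with the tame symbol/residue maps is compatible with the Adams decomposition (this is precisely the content of the complex in \cite{S} quoted right after Theorem~\ref{theorem: Adamsdiagram}).

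Once the identification of complexes is in place, the cohomology in degree $p$ of the sheafified complex is $H^p(X,K^M_q(O_X))_{\mathbb Q}$. In particular at $p=q$ (the last nonzero spot, by the vanishing of negative K-groups) we get
\[
CH^M_q(D^{\mathrm{perf}}(X))=\frac{\ker(d_{1,X}^{q,-q})}{\operatorname{im}(d_{1,X}^{q-1,-q})}=H^q(X,K^M_q(O_X))_{\mathbb Q}=CH^q(X)_{\mathbb Q},
\]
the last equality being Soulé's variant of Bloch–Quillen recalled in the introduction. The cycle-level statements follow a fortiori: $Z^M_q(D^{\mathrm{perf}}(X))=\ker(d_{1,X}^{q,-q})=\bigoplus_{x\in X^{(q)}}\mathbb Z\cdot\overline{\{x\}}\otimes\mathbb Q$ because $K^M_0(k(x))_{\mathbb Q}=\mathbb Q$ and the outgoing differential is zero (negative K-groups vanish), which is exactly $Z^q(X)_{\mathbb Q}$; and $Z^M_{q,\mathrm{rat}}(D^{\mathrm{perf}}(X))=\operatorname{im}(d_{1,X}^{q-1,-q})$ is the image of $\bigoplus_{x\in X^{(q-1)}}k(x)^\times\otimes\mathbb Q$ under the divisor-of-a-rational-function map, i.e.\ $Z^q_{\mathrm{rat}}(X)_{\mathbb Q}$.

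The main obstacle I expect is the compatibility of differentials: one has to be sure that the Adams eigenspace decomposition of the coniveau $E_1$-differential, applied in weight $q$, recovers on each graded piece the classical Gersten/Milnor differential (tame symbols for $x\in X^{(p-1)}\leadsto X^{(p)}$), not merely that source and target match up as groups. This is handled by appealing to Soulé's paper \cite{S}, where exactly this decomposed complex is constructed and shown to compute the same cohomology, together with the Gersten-resolution property (the sheafification of the right column is a flasque resolution of $K^M_q(O_X)_{\mathbb Q}$, which is the content of Theorem~\ref{theorem: Milnorversion} for the right column). A secondary point is to confirm that $K^M_0$ and $K^M_{-r}$ with support, as defined via eigenspaces, are $\mathbb Q$ and $0$ respectively for a smooth point of codimension $p$; this is immediate from Lemma~\ref{theorem: SouleRRWithoutdenominator} and the vanishing of negative K-theory of fields.
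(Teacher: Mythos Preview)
Your proposal is correct and follows essentially the same route as the paper: both use Theorem~\ref{theorem: Soule} and Lemma~\ref{theorem: SouleRRWithoutdenominator} to identify the weight-$q$ right column of Theorem~\ref{theorem: Milnorversion} term-by-term with the classical Milnor $K$-theory Gersten complex, whence $Z^M_q=\bigoplus_{x\in X^{(q)}}K^M_0(k(x))_{\mathbb Q}=Z^q(X)_{\mathbb Q}$ and $Z^M_{q,\mathrm{rat}}=\mathrm{Im}(d_{1,X}^{q-1,-q})=Z^q_{\mathrm{rat}}(X)_{\mathbb Q}$ via Quillen's description of the differential. The only difference is that you pass through $H^q(X,K^M_q(O_X))_{\mathbb Q}$ and the Soul\'e--Bloch--Quillen identification to obtain the Chow-group equality, whereas the paper simply takes the quotient of the two cycle-level identities already established; your detour is harmless but unnecessary here.
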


\begin{proof}
Since $X$ is smooth, by Theorem ~\ref{theorem: Soule} and
Theorem ~\ref{theorem: SouleRRWithoutdenominator}, the right column in Theorem ~\ref{theorem: Milnorversion} agrees with the following classical sequence,
\begin{footnotesize}
\begin{align*}
 0 \to &  K^{M}_{q}(k(X))_{\mathbb{Q}} \to \bigoplus_{x \in X^{(1)}}K^{M}_{q-1}(k(x))_{\mathbb{Q}} \to \dots  \to \bigoplus_{x \in X^{(q-1)}}K^{M}_{1}(k(x))_{\mathbb{Q}} \\
 &  \xrightarrow{d_{1,X}^{q-1,-q}} \bigoplus_{x \in X^{(q)}}K^{M}_{0}(k(x))_{\mathbb{Q}} \xrightarrow{d_{1,X}^{q,-q}} 0.
\end{align*}
\end{footnotesize}

Noting $K^{M}_{0}(k(x))_{\mathbb{Q}}= \mathbb{Q}$, one has
\[
  Z^{M}_{q}(D^{\mathrm{perf}}(X))= \mathrm{Ker}(d_{1,X}^{q,-q})= \bigoplus_{x \in X^{(q)}}K^{M}_{0}(k(x))_{\mathbb{Q}} = Z^{q}(X)_{\mathbb{Q}}.
\]

Since $K^{M}_{1}(k(x))_{\mathbb{Q}}= K_{1}(k(x))_{\mathbb{Q}}$, as explained in Quillen's proof of Bloch's formula, the image of $d_{1,X}^{q-1,-q}$ gives the rational equivalence. Hence, 
\[
 Z^{M}_{q,\mathrm{rat}}(D^{\mathrm{perf}}(X)) = \mathrm{Im}(d_{1,X}^{q-1,-q}) = Z^{q}_{\mathrm{rat}}(X)_{\mathbb{Q}}.
\]

Therefore, we have the following identification
\[
  CH^{M}_{q}(D^{\mathrm{perf}}(X)) = CH^{q}(X)_{\mathbb{Q}}.
\]
\end{proof}

We obtain the following \textbf{Bloch's formulas}, which gives a positive answer to Green-Griffiths' \textbf{Question 1.1} for trivial deformations.

\begin{theorem} \label{theorem: ExtendBlochFormula}
Let $X$ be a smooth projective variety over a field $k$ of characteristic $0$ and $X_{j}$ be the $j$-th trivial infinitesimal deformation of $X$. For $q$ a non negative integer, 
we have the following identifications:
\[ 
 CH^{M}_{q}(D^{\mathrm{perf}}(X))  = H^{q}(X,K_{q}^{M}(O_{X}))_{\mathbb{Q}}.
\]
 \[
  CH^{M}_{q}(D^{\mathrm{perf}}(X_{j})) = H^{q}(X,K_{q}^{M}(O_{X_{j}}))_{\mathbb{Q}}.
 \]
\end{theorem}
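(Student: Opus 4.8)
The plan is to read both identifications directly off the commutative diagram of Theorem~\ref{theorem: Milnorversion}, using the elementary fact that the cohomology of a flasque resolution computes sheaf cohomology. The two assertions are proved in parallel: the first uses the right column of that diagram (coefficient sheaf $K^{M}_{q}(O_{X})_{\mathbb{Q}}$), the second the middle column (coefficient sheaf $K^{M}_{q}(O_{X_{j}})_{\mathbb{Q}}$). The case $q=0$ is immediate, since $K^{M}_{0}$ is the constant sheaf $\underline{\mathbb{Z}}$ and both sides equal $\mathbb{Q}$ for $X$ irreducible; so assume $q\geq 1$, which is the range in which Theorem~\ref{theorem: Milnorversion} applies.

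First I would spell out the middle column. By Theorem~\ref{theorem: Milnorversion}, its Zariski sheafification is a flasque resolution
\[
0 \to K^{M}_{q}(O_{X_{j}})_{\mathbb{Q}} \to \mathcal{C}^{0} \to \mathcal{C}^{1} \to \cdots \to \mathcal{C}^{d} \to 0,
\]
where $\mathcal{C}^{p}$ is the sheafification of $\bigoplus_{x_{j}\in X_{j}^{(p)}} K^{M}_{q-p}(O_{X_{j},x_{j}}\,\mathrm{on}\,x_{j})$, a direct sum of skyscraper sheaves and hence flasque. Since $X_{j}$ and $X$ have the same underlying topological space (a nilpotent thickening does not change it), $H^{q}(X_{j},-)=H^{q}(X,-)$, and therefore $H^{q}(X,K^{M}_{q}(O_{X_{j}}))_{\mathbb{Q}}$ is the $q$-th cohomology group of the complex of global sections $\Gamma(X_{j},\mathcal{C}^{\bullet})$, which is exactly the middle column of Theorem~\ref{theorem: Milnorversion}. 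On the other hand, by Definition~\ref{corollary: MilnorVerWellDef} one has $Z^{M}_{q}(D^{\mathrm{perf}}(X_{j}))=\mathrm{Ker}(d_{1,X_{j}}^{q,-q})$ and $Z^{M}_{q,\mathrm{rat}}(D^{\mathrm{perf}}(X_{j}))=\mathrm{Im}(d_{1,X_{j}}^{q-1,-q})$, so $CH^{M}_{q}(D^{\mathrm{perf}}(X_{j}))$ is by definition the cohomology of that same complex at the spot $\Gamma(X_{j},\mathcal{C}^{q})=\bigoplus_{x_{j}\in X_{j}^{(q)}}K^{M}_{0}(O_{X_{j},x_{j}}\,\mathrm{on}\,x_{j})$. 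Comparing the two gives $CH^{M}_{q}(D^{\mathrm{perf}}(X_{j}))=H^{q}(X,K^{M}_{q}(O_{X_{j}}))_{\mathbb{Q}}$. Running the identical argument with the right column of Theorem~\ref{theorem: Milnorversion} yields $CH^{M}_{q}(D^{\mathrm{perf}}(X))=H^{q}(X,K^{M}_{q}(O_{X}))_{\mathbb{Q}}$ (this case also follows from the preceding ``Agreement'' theorem together with Soul\'e's variant of Bloch--Quillen, but the direct argument is uniform).

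The real content — and the step I expect to be the main obstacle — is packaged into Theorem~\ref{theorem: Milnorversion}: that the sheafified middle column is genuinely exact as a complex of sheaves with kernel $K^{M}_{q}(O_{X_{j}})_{\mathbb{Q}}$, rather than merely an exact sequence of abelian groups. This rests on three facts, all established earlier: the right column is Quillen's Gersten resolution for the smooth variety $X$, with the weight-$q$ eigenspace identified with Milnor $K$-theory via Theorems~\ref{theorem: Soule} and \ref{theorem: SouleRRWithoutdenominator}; the left column is the Cousin complex, a flasque resolution of the locally free — hence Cohen--Macaulay — sheaf $(\Omega^{q-1}_{X/\mathbb{Q}})^{\oplus j}$, by the local-cohomology computation of Theorem~\ref{theorem: RelNegCycSupport}; and the relative (kernel) column is carried isomorphically onto the Cousin complex by the relative Chern character of Corollary~\ref{corollary: OrderNRelVersion}. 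Because the middle column is a termwise-split extension of the right column by the relative column, $d_{1,X_{j}}^{p,q}=d_{1,X}^{p,q}\oplus \overline{d}_{1,X_{j}}^{p,q}$, exactness of the outer two columns forces exactness of the middle one. One should also record that the formation of the Gersten and Cousin complexes commutes with restriction to Zariski opens, so that these are resolutions of sheaves and not just of their groups of global sections; this is precisely what licenses the passage from $\Gamma(X_{j},\mathcal{C}^{\bullet})$ to $H^{q}(X,-)$ in the argument above.
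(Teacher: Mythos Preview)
Your proposal is correct and follows the same approach as the paper, whose proof is simply ``Immediately from Theorem~\ref{theorem: Milnorversion}.'' You have spelled out in detail exactly what that sentence means: the Milnor $K$-theoretic Chow group is by definition the $q$-th cohomology of the (global sections of the) middle or right column, and since those columns sheafify to flasque resolutions of $K^{M}_{q}(O_{X_{j}})_{\mathbb{Q}}$ and $K^{M}_{q}(O_{X})_{\mathbb{Q}}$ respectively, this cohomology computes the stated sheaf cohomology.
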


\begin{proof}
Immediately from Theorem ~\ref{theorem: Milnorversion}.
\end{proof}

\end{document}